\renewcommand\bf\bfseries
\addspace\printfield{pages}\addspace
\newcommand{\leqnomode}{\tagsleft@true\let\veqno\@@leqno}
\newcommand{\reqnomode}{\tagsleft@false\let\veqno\@@eqno}
\numberwithin{equation}{section}
\newcommand\myshade{85}
\colorlet{mylinkcolor}{violet}
\colorlet{mycitecolor}{YellowOrange}
\colorlet{myurlcolor}{Aquamarine}
\definecolor{ct_black}{HTML}{000000}
\definecolor{ct_orange}{HTML}{ED872D}
\definecolor{ct_purple}{HTML}{7A68A6}
\definecolor{ct_blue}{HTML}{348ABD}
\definecolor{ct_turquoise}{HTML}{188487}
\definecolor{ct_red}{HTML}{E32636}
\definecolor{ct_pink}{HTML}{CF4457}
\definecolor{ct_green}{HTML}{467821}
\definecolor{ct2_green}{HTML}{9FF781}
\definecolor{ct2_green_dark}{HTML}{088A08}
\theoremstyle{plain}
\newtheorem{thm}{\protect\theoremname}[section]
\theoremstyle{plain}
\newtheorem{lem}[thm]{\protect\lemmaname}
\theoremstyle{plain}
\newtheorem{cor}[thm]{\protect\corollaryname}
\theoremstyle{plain}
\newtheorem{prop}[thm]{\protect\propositionname}
\theoremstyle{plain}
\theoremstyle{remark}
\newtheorem{rem}[thm]{\protect\remarkname}
\theoremstyle{definition}
\newtheorem{defn}[thm]{\protect\definitionname}
\theoremstyle{plain}
\providecommand{\assumptionname}{Assumption}
\providecommand{\claimname}{Claim}
\providecommand{\corollaryname}{Corollary}
\providecommand{\definitionname}{Definition}
\providecommand{\lemmaname}{Lemma}
\providecommand{\propositionname}{Proposition}
\providecommand{\remarkname}{Remark}
\providecommand{\theoremname}{Theorem}
\providecommand{\examplename}{Example}
\crefname{section}{Section}{Sections}
\crefname{example}{Example}{Examples}
\crefname{appendix}{Appendix}{Appendices}
\crefname{figure}{Figure}{Figures}
\crefname{assumption}{Assumption}{Assumptions}
\crefname{thm}{Theorem}{Theorems}
\crefname{lem}{Lemma}{Lemmas}
\crefname{table}{Table}{Tables}
\newtheorem*{lem*}{\protect\lemmaname}
\newcommand{\ii}{\operatorname{i}}
\newcommand{\ZZ}{\mathbb{Z}}
\newcommand{\bS}{\mathbb{S}}
\newcommand{\NN}{\mathbb{N}}
\newcommand{\RR}{\mathbb{R}}
\newcommand{\QQ}{\mathbb{Q}}
\newcommand{\CC}{\mathbb{C}}
\newcommand{\PP}{\mathbb{P}}
\newcommand{\calA}{\mathcal{A}}
\newcommand{\calB}{\mathcal{B}}
\newcommand{\calC}{\mathcal{C}}
\newcommand{\calN}{\mathcal{N}}
\newcommand{\calG}{\mathcal{G}}
\newcommand{\calD}{\mathcal{D}}
\newcommand{\calU}{\mathcal{U}}
\newcommand{\calH}{\mathcal{H}}
\newcommand{\calK}{\mathcal{K}}
\newcommand{\calL}{\mathcal{L}}
\newcommand{\calP}{\mathcal{P}}
\newcommand{\calS}{\mathcal{S}}
\newcommand{\ti}[1]{\widetilde{#1}}
\newcommand\norm[1]{\left\lVert#1\right\rVert}
\newcommand{\ip}[2]{\langle #1, #2 \rangle}
\newcommand{\ve}{\varepsilon}
\newcommand{\vf}{\varphi}
\newcommand{\Id}{\mathds{1}}
\newcommand{\findex}{\operatorname{ind}}
\newcommand{\supp}{\operatorname{supp}}
\newcommand{\im}{\operatorname{im}}
\newcommand{\eq}[1]{\begin{align*}#1\end{align*}}
\newcommand{\eql}[1]{\begin{align}#1\end{align}}
\newcommand{\NT}{\mathrm{nt}}
\newcommand{\calQ}{\mathcal{Q}}
\newcommand{\br}[1]{\left(#1\right)}
\newcommand{\Ext}{\operatorname{Ext}}
\title{Essentially Commuting with a Unitary}
\author{\href{mailto:jc1220@math.princeton.edu}{Jui-Hui Chung}\\
	{\footnotesize Program in Applied and Computational Mathematics, Princeton University }\\
	\href{mailto:jacobshapiro@princeton.edu}{Jacob Shapiro}\\
	{\footnotesize Department of Mathematics, Princeton University}
}
\begin{document}
\reqnomode

\maketitle
\begin{abstract}
Let $R$ be a unitary operator whose spectrum is the circle. We show that the set of unitaries $U$ which essentially commute with $R$ (i.e., $[U,R]\equiv UR-RU$ is compact) is path-connected. Moreover, we also calculate the set of path-connected components of the orthogonal projections which essentially commute with $R$ and obey a non-triviality condition, and prove it is bijective with $\ZZ$.
\linebreak
\end{abstract}
\tableofcontents

\section{Introduction and the result}

Let $\calH$ be a separable Hilbert space, $\calB(\calH)$ the space of bounded operators on $\calH$, and $\calK(\calH)$ the space of compact operators on $\calH$. Let $R$ be a unitary operator on $\calH$ such that $\sigma(R)=\bS^1$. In this paper we are interested in operators that essentially commute with $R$, in particular in the space of unitaries and orthogonal projections which obey this constraint. In anticipation of application in mathematical physics, we use the terminology that the operator $A\in\calB(\calH)$  is ``$R$-local'' iff
\eql{
    [A,R]:=AR-RA\in\calK(\calH)\,.
} Clearly the subset of $\calB(\calH)$ of operators which essentially commute with $R$ is a $C^\ast$-algebra, we denote it by $\calL_R$ and call it the ``$R$-local algebra'', its norm is induced by the operator norm on $\calB(\calH)$.

We refer the reader to \cite[Section 8]{ChungShapiro2023} for  motivation in the  mathematical physics of topological insulators for studying the space of unitaries $\calU(\calL_R)$ and self-adjoint projections $\calP(\calL_R)$ within $\calL_R$. In one sentence: $R$ would be the Dirac phase, and so we are classifying either class A (projections) or class AIII systems (unitaries) respectively in even space dimensions. Non-triviality has to do with systems which are honestly bulk systems. The details of this application are postponed to a forthcoming paper. 

From the point of view of pure mathematics, operators which essentially commute with a fixed non-trivial projection $\Lambda$ were studied in \cite{CareyHurstOBrien1982,AndruchowChiumientoLucero2015} for unitaries and orthogonal projections respectively, so it is natural to generalize the question from a fixed non-trivial projection (having $\Set{0,1}$ in its essential spectrum) $\Lambda$ to a fixed unitary $R$ with $\sigma(R)=\bS^1$. The conclusion of \cite{CareyHurstOBrien1982} is that unitaries which essentially commute with $\Lambda$ have path-connected components which are bijective with $\ZZ$. The orthogonal projections which essentially commute with $\Lambda$ have a more complicated structure \cite{AndruchowChiumientoLucero2015}, but if one further restricts to the set of projections $P$ such that both $\sigma_{\mathrm{ess}}(\Lambda P \Lambda)$ and $\sigma_{\mathrm{ess}}(\Lambda^\perp P \Lambda^\perp)$ are exactly $\Set{0,1}$, then that subset of projections is path-connected. Our present $R$-non-triviality condition is a generalization of this idea to the unitary case.

Our first main theorem is
\begin{thm}[Classification of $R$-local unitaries]\label{thm:R-local unitaries are path-connected} The space$\,$ $\calU(\calL_R)$ of $R$-local unitaries is path-connected:
\eql{
    \pi_0(\calU(\calL_R)) \cong \Set{0}\,.
}
\end{thm}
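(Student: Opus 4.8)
The plan is to first strip off the compact operators. Since $\calK(\calH)\subseteq\calL_R$ (every compact operator essentially commutes with everything), there is a short exact sequence of $C^\ast$-algebras $0\to\calK(\calH)\to\calL_R\xrightarrow{\pi}\calL_R/\calK(\calH)\to0$, and I claim it suffices to show that $\calU(\calL_R/\calK(\calH))$ is path-connected. Granting that, fix $U\in\calU(\calL_R)$ and a path $t\mapsto u_t$ of unitaries in $\calL_R/\calK(\calH)$ with $u_0=\pi(U)$ and $u_1=\Id$. Subdividing $[0,1]$ finely, each $u_{s_{k+1}}u_{s_k}^{-1}$ lies within distance $2$ of $\Id$, hence equals $\exp(\ii a_k)$ for a self-adjoint $a_k$ of norm $<\pi$; lift each $a_k$ to a self-adjoint $A_k\in\calL_R$ (self-adjoint elements lift through any surjective $\ast$-homomorphism of $C^\ast$-algebras, by symmetrizing an arbitrary lift). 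The telescoping product $W:=\exp(\ii A_{m-1})\cdots\exp(\ii A_0)\in\calU(\calL_R)$ then satisfies $\pi(WU)=\Id$, so $WU\in\Id+\calK(\calH)$, while $\tau\mapsto\exp(\ii\tau A_{m-1})\cdots\exp(\ii\tau A_0)$ connects $W$ to $\Id$, hence $U$ to $WU$. Finally $WU$, being unitary in $\Id+\calK(\calH)$, equals $\exp(\ii A)$ for a compact self-adjoint $A$ (a Borel branch of the logarithm vanishing at $1$; compactness follows since $WU-\Id\in\calK(\calH)$), and $A\in\calL_R$ because compacts are $R$-local, so $\tau\mapsto\exp(\ii\tau A)$ connects $WU$ to $\Id$. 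Thus $\calU(\calL_R)$ is path-connected once $\calU(\calL_R/\calK(\calH))$ is.

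It remains to connect an arbitrary $R$-local unitary to $\Id$ modulo compacts. The spectral theorem puts $R$ in the direct-integral form $\calH\cong\int^{\oplus}_{\bS^1}\calH_z\,d\mu(z)$ with $R$ equal to multiplication by $z$ and $\mu$ a Borel measure of full support. The reason the answer is $\Set{0}$ here, whereas it is $\ZZ$ for a non-trivial projection $\Lambda$ (spectrum $\Set{0,1}$), is that the only possible obstruction is an index localized on $\sigma(R)$, and the connectedness of $\bS^1$ lets this index be transported around the circle and cancelled. This is already visible for the rank-one model $R=M_z$ on $L^2(\bS^1)$: the multiplication unitary $M_{z^n}$ has winding number $n$ and is not connected to $\Id$ within $\calU(C(\bS^1))$, yet it is connected to $\Id$ within $\calU(\calL_R)$ through the multiplication unitaries $M_{\exp(\ii tn\theta)}$, $t\in[0,1]$, which are discontinuous on $\bS^1$ but still lie in $L^\infty(\bS^1)\subseteq\{R\}'\subseteq\calL_R$. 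Such spiralling paths, unavailable when $\sigma(R)$ has a gap, are the mechanism; this suggests that $\{R\}'$ already captures all of $\pi_0(\calU(\calL_R))$, which---since $\calU$ of a von Neumann algebra is norm-path-connected---would force it to be trivial. Accordingly, the step I would try to prove is: every $R$-local unitary $V$ is homotopic, modulo compacts and through $R$-local unitaries, to one lying in $\{R\}'$.

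That reduction is the heart of the matter and the main obstacle: in the projection case it fails precisely because $\findex(\Lambda V\Lambda)\in\ZZ$ obstructs it, so the argument must use $\sigma(R)=\bS^1$ in an essential way. One natural strategy is to cut $\bS^1$ into finitely many closed arcs $I_1,\dots,I_m$, analyze the corner operators $E_{I_j}VE_{I_j}$ with $E_{I_j}=\chi_{I_j}(R)$---these are only \emph{almost} Fredholm, because the $E_{I_j}$ need not essentially commute with $V$---and show that their would-be indices telescope around the circle to $0$, so that the off-diagonal compact pieces can be homotoped away and $V$ brought into block form along the arcs, and then into $\{R\}'$ by refining. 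Making ``telescope around the circle'' precise, i.e.\ controlling the failure of $V$ to essentially commute with the $E_{I_j}$ well enough to define an additive corner index, is where the real work lies; handling the infinite-dimensional multiplicity fibres $\calH_z$ along the way is where Kuiper's theorem would enter. (A less hands-on route is via $K$-theory: show $K_1(\calL_R)=0$ together with injectivity of the canonical map $\pi_0(\calU(\calL_R))\to K_1(\calL_R)$; I prefer the direct homotopy above.)
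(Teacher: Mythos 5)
Your reduction from $\calU(\calL_R)$ to $\calU(\calL_R/\calK(\calH))$ is correct, but it is also essentially vacuous: since paths push forward and lift (as you show), the two path-component sets are canonically in bijection, so nothing is gained. The real content of the theorem is still entirely open at that point, and you acknowledge this yourself. Your proposed strategy --- deform every $R$-local unitary into the commutant $\{R\}'$, then invoke norm-path-connectedness of the unitary group of a von Neumann algebra --- is a plausible reduction in principle, and the winding-number/spiralling heuristic correctly identifies why $\sigma(R)=\bS^1$ matters where $\sigma(\Lambda)=\{0,1\}$ does not. But the arc-cutting argument you sketch to carry it out is not a proof: ``controlling the failure of $V$ to essentially commute with the $E_{I_j}$ well enough to define an additive corner index'' is precisely the hard part, and it is left entirely unresolved. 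As stated, the proposal has a genuine gap: the only complete parts are the trivial quotient step and a heuristic, and the decisive step is explicitly flagged as ``the main obstacle'' without being overcome.

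For comparison, the paper does not attempt to push $U$ into $\{R\}'$. It instead works in a concrete model ($L$ on $\ell^2(\ZZ^2)$), computes $K_1(\calL_L)=0$ via Paschke duality (\cref{prop:K1 group of L local algebra}), which gives $U\oplus\Id_n\sim_h\Id_{n+1}$ for some $n$ (\cref{cor:consequence of trivial K_1 group for L-local algebra}), and then removes the extra stabilization by a geometric construction: a ``sufficiently infinite'' diagonal projection $P$ satisfying $P\oplus\Id_n\sim_0 P$ (\cref{lem:class of infinite projections}), together with a deformation of $U$ (via \cref{lem:localized centers,cor:V operator that revert localized centers}) to the form $P+P^\perp WP^\perp$, which \cref{prop:block unitary homotopy} then connects to $\Id$. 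The geometric substitute for your ``additive corner index telescoping around the circle'' is the lattice argument with localized centers and annuli, which exploits boundedness and $L$-locality rather than an arc-by-arc index count. The $K$-theory route you dismiss at the end is in fact closer to what the paper does, and in \cref{sec:remark on K-theory and our main results} the paper notes that one can alternatively invoke Paschke's $K_1$-injectivity of the dual algebra to finish; either way one cannot avoid the work of showing that the $K_1$-class captures the path component, which is exactly what your proposal omits.
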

Here and in the sequel we shall always take the topology to be the subspace topology from the operator norm topology on $\calB(\calH)$.

Next we turn to the space of orthogonal projections essentially commuting with $R$. Unfortunately we do not have a statement about this entire space, but rather, only a subset of it which we term \emph{R}-non-trivial.
\begin{defn}\label{defn:non-triviality}
    An orthogonal projection $P\in\calP(\calL_R)$ is termed $R$-non-trivial iff for any continuous non-zero $f:\bS^1\to\CC$, both $P f(R) P$ and $P^\perp f(R) P^\perp$ are \emph{not} compact operators. 

    We denote the space of all $R$-non-trivial orthogonal projections as $\calP^{\mathrm{nt}}_R$ and furnish it with the subspace topology from $\calB(\calH)$. 
\end{defn}

\begin{rem}
In the language of $K$-theory of dual $C^\ast$-algebras, the orthogonal projection $P$ is $R$-non-trivial iff $P$ is \emph{ample} with respect to the dual $C^\ast$-algebra \cref{eq:dual of C(S) in R rep}. See \cite[Definition 5.1.3]{higson2000analytic}.
\end{rem}

We note that for any $R$-local element $P\in\calP(\calL_R)$, there is a well-defined continuous integer associated to it, given by 
\eql{ \label{eq:index of R-local projections}
    \calP(\calL_R) \ni P \mapsto \findex(\PP R) =: \calN_R(P) \in \ZZ
} where we use the shorthand notation \eql{
    \PP R := P R P + P^\perp\,.
} Indeed, the fact $[P,R]$ is compact implies, by Atkinson, that $\PP R$ is Fredholm with parametrix $\PP R^\ast$.

With this we have the second main
\begin{thm}[Classification of $R$-non-trivial orthogonal projections]\label{thm:classification of R-non-trivial orthogonal projections}
    The continuous map 
    \eql{\label{eq:projection index}
    \calN_R:\calP_R^{\mathrm{nt}}\to\ZZ
    } lifts to a bijection on path-connected components
    \eql{
        \pi_0(\calP_R^{\mathrm{nt}}) \cong \ZZ\,.
    }

    Moreover, $\calP_R^{\mathrm{nt}}$ is a maximal component within $\calP(\calL_R)$, in the sense that if $P\in \calP_R^{\mathrm{nt}}$, $Q\in\calP(\calL_R)$ and $P,Q$ are in the same path-connected component of $\calP(\calL_R)$ then actually $Q\in\calP_R^{\mathrm{nt}}$.
\end{thm}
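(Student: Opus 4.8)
The plan is as follows. Since $\calN_R\colon\calP(\calL_R)\to\ZZ$ is continuous into a discrete set it is constant on path-components, hence descends to a map $\pi_0(\calP_R^{\mathrm{nt}})\to\ZZ$, and it remains to show this descended map is a bijection and that $\calP_R^{\mathrm{nt}}$ is open and closed in $\calP(\calL_R)$. The objects carrying the relevant data are, for $P\in\calP_R^{\mathrm{nt}}$, the compressions $T_P:=PRP|_{P\calH}$ on $P\calH$ and $T_{P^\perp}:=P^\perp RP^\perp|_{P^\perp\calH}$ on $P^\perp\calH$: using $PRP^\perp=P[R,P^\perp]\in\calK$ and its adjoint one sees both are essentially normal; the $R$-non-triviality of $P$ is exactly the statement that $f\mapsto Pf(R)P|_{P\calH}$ and $f\mapsto P^\perp f(R)P^\perp|_{P^\perp\calH}$ induce \emph{faithful} unital $*$-homomorphisms of $C(\bS^1)$ into $\calB(P\calH)/\calK(P\calH)$ and $\calB(P^\perp\calH)/\calK(P^\perp\calH)$, whence $\sigmaess(T_P)=\sigmaess(T_{P^\perp})=\bS^1$; and from $(PRP+P^\perp)(P+P^\perp RP^\perp)-R\in\calK$ (each factor block-diagonal, with Fredholm indices $\findex(T_P)$ and $\findex(T_{P^\perp})$) one reads off $\findex(T_P)=\calN_R(P)=-\findex(T_{P^\perp})$.

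For injectivity on $\pi_0$ I would argue as follows. Given $P,Q\in\calP_R^{\mathrm{nt}}$ with $\calN_R(P)=\calN_R(Q)=:n$, the Brown--Douglas--Fillmore theorem classifying essentially normal operators with essential spectrum $\bS^1$ up to unitary equivalence modulo $\calK$ by their Fredholm index (equivalently, $\Ext(C(\bS^1))\cong\ZZ$) supplies unitaries $V\colon P\calH\to Q\calH$ and $W\colon P^\perp\calH\to Q^\perp\calH$ with $VT_PV^*-T_Q$ and $WT_{P^\perp}W^*-T_{Q^\perp}$ compact. Then $U:=V\oplus W$, formed relative to $\calH=P\calH\oplus P^\perp\calH=Q\calH\oplus Q^\perp\calH$, is a unitary with $UPU^*=Q$; writing $R$ in $2\times2$ block form in each decomposition, with the off-diagonal blocks $PRP^\perp, P^\perp RP$ and their $Q$-analogues compact, a short computation shows $URU^*$ and $R$ agree modulo $\calK$, so $U\in\calU(\calL_R)$. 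By \cref{thm:R-local unitaries are path-connected} there is a path $t\mapsto U_t$ in $\calU(\calL_R)$ from $\Id$ to $U$; since conjugation by an element of $\calU(\calL_R)$ moves $R$ --- hence every $f(R)$ --- only by a compact, it preserves $R$-locality and the defining non-compactness conditions, so $t\mapsto U_tPU_t^*$ is a path in $\calP_R^{\mathrm{nt}}$ from $P$ to $Q$, and $\calN_R$ is injective on $\pi_0$.

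For surjectivity I would pass to the model $\calH_0:=L^2(\bS^1)\otimes\ell^2(\NN)$ with $R_0:=M_z\otimes\Id$, where explicit $R_0$-non-trivial projections realize every index: for $P_+,P_-$ the Hardy and co-Hardy projections of $L^2(\bS^1)$ and $Q$ a rank-$m$ projection on $\ell^2(\NN)$, one computes $\calN_{R_0}(P_+\otimes Q)=-m$ and $\calN_{R_0}(P_-\otimes Q)=m$ for $m\geq1$, and $\calN_{R_0}(\Id\otimes Q)=0$ whenever $Q\neq0\neq Q^\perp$, each being easily checked to be $R_0$-local and $R_0$-non-trivial. Since $\sigma(R)=\bS^1$ has no isolated points, $C^*(R)\cap\calK=0$, so $f\mapsto f(R)$ is a faithful, ample, unital representation of $C(\bS^1)$ on $\calH$, as is $f\mapsto f(R_0)$ on $\calH_0$; Voiculescu's theorem then yields a unitary $\Phi\colon\calH_0\to\calH$ with $\Phi f(R_0)\Phi^*-f(R)\in\calK$ for every $f$, and $\Phi(\cdot)\Phi^*$ carries a model projection of index $n$ to a projection $P\in\calP_R^{\mathrm{nt}}$ with $\calN_R(P)=n$. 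With the previous step this gives $\pi_0(\calP_R^{\mathrm{nt}})\cong\ZZ$.

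For the maximality clause it suffices to show $\calP_R^{\mathrm{nt}}$ is clopen in $\calP(\calL_R)$, since a clopen subset is a union of path-components. The assignment $P\mapsto\rho_P$, where $\rho_P\colon C(\bS^1)\to\calB(\calH)/\calK(\calH)$ is the $*$-homomorphism $f\mapsto\pi(Pf(R)P)$ (multiplicative because $Pf(R)P^\perp g(R)P\in\calK$), is $2$-Lipschitz, and $P\in\calP_R^{\mathrm{nt}}$ precisely when $\rho_P$ and $\rho_{P^\perp}$ are both faithful. Within the set of $*$-homomorphisms $C(\bS^1)\to\calB(\calH)/\calK(\calH)$ the faithful --- equivalently, isometric --- ones form a clopen set: open, because a $*$-homomorphism within operator-norm distance $<1$ of an isometric one is bounded below, hence injective, hence isometric; closed, because a norm-limit of isometric $*$-homomorphisms is again isometric. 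Pulling back along $P\mapsto\rho_P$ and $P\mapsto\rho_{P^\perp}$ shows $\calP_R^{\mathrm{nt}}$ is clopen. I expect the main obstacle to be the injectivity step --- producing the \emph{$R$-local} unitary conjugating $P$ to $Q$ --- which is exactly where Brown--Douglas--Fillmore, and for surjectivity Voiculescu's theorem, do the essential work; everything else is bookkeeping with compact perturbations and an appeal to \cref{thm:R-local unitaries are path-connected}.
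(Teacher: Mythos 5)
Your proof is correct and reaches the same conclusion, but along a genuinely different route at each of the three steps. For injectivity, you apply Brown--Douglas--Fillmore directly to the essentially normal compressions $T_P=PRP|_{P\calH}$ and $T_{P^\perp}$ (non-triviality makes $f\mapsto\pi(Pf(R)P)$ a faithful $*$-homomorphism, forcing $\sigmaess(T_P)=\bS^1$, and the factorization $(PRP+P^\perp)(P+P^\perp RP^\perp)-R\in\calK$ gives $\findex(T_P)=\calN_R(P)=-\findex(T_{P^\perp})$), producing an explicit $R$-local unitary $U=V\oplus W$ with $UPU^*=Q$; the paper instead reasons at the $K$-theoretic level in \cref{lem:unitary equivalence of L-non-trivial projections}, passing from $[P]_0=[Q]_0$ to $P\sim_s Q$, upgrading to $P\sim Q$ and $P^\perp\sim Q^\perp$ via the ampleness criterion of Higson--Roe, and then to $P\sim_u Q$ --- your BDF argument is essentially a hands-on unpacking of that abstraction. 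For surjectivity you use the model $L^2(\bS^1)\otimes\ell^2(\NN)$ with Hardy and co-Hardy projections and invoke Voiculescu's theorem, while \cref{lem:projection pi0 surjective} works with the bilateral shift and half-line projection on $\ell^2(\ZZ)$ via Weyl--von Neumann--Berg, varying the index by replacing the shift by $R^{-k}$; these are interchangeable. For the maximality clause you show $\calP_R^{\mathrm{nt}}$ is clopen by pulling back the (clopen) set of faithful $*$-homomorphisms $C(\bS^1)\to\calQ$ under the Lipschitz map $P\mapsto\bigl(f\mapsto\pi(Pf(R)P)\bigr)$, a purely topological route, whereas \cref{lem:L-non-trivial are path components} argues algebraically from $P=U^*QU$ via the commutator identity $Qf(L)Q = UPU^*[f(L),U]PU^* + UPf(L)PU^*$; both establish that $\calP_R^{\mathrm{nt}}$ is a union of path-components. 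Each version ultimately reduces the existence of the connecting path to \cref{thm:R-local unitaries are path-connected}, which is where the real work lives.
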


At the time being we have little to say about the path-components of $\calP(\calL_R)\setminus\calP_R^{\mathrm{nt}}$.

In the process of writing this manuscript, we learnt of an alternative proof of \cref{thm:R-local unitaries are path-connected} which uses the theory of $K_1$-injectivity, see \cref{sec:remark on K-theory and our main results}. In that sesne, one could say that \cref{thm:R-local unitaries are path-connected} is an alternative proof (to that of Paschke \cite{paschke1981k}) of $K_1$-injectivity for the particular algebra of $R$-local operators.

\begin{rem}
    One may ask why the constraint that $R$ has spectrum on the whole unit circle is important. First of all, it is clear by the Weyl-von Nuemann-Berg theorem that only the essential spectrum of $R$ matters. Second, by the analysis in \cite{chung2024topological}, if $\sigma_{\mathrm{ess}}(R)$ only has finitely many points, then the classification is covered by \cite[Theorems 4.1 and 5.5]{chung2024topological} and is \emph{different} than what is presented here for $\sigma(R)=\bS^1$. It is at the moment not clear to us what happens in more general cases of $\sigma_{\mathrm{ess}}(R)$. For our application in mathematical physics, however, only the case presented here is important.
\end{rem}

In the rest of this section we outline our general strategy for the proof of our two theorems and set up some notation. \cref{sec:unitaries classification proof,sec:orthogonal projections classification proof} contain the proofs of our two main theorems, minus the technical details, which are relegated to \cref{sec:details of the proof}. The appendices contain the consequences of locality and boundedness, the $K$-groups calculations of the local algebra and a discussion of $K_1$-injectivity.

\subsection{Strategy for the proof}\label{sec:strategy}
The general strategy of classifying operators that essentially commute with a fixed non-trivial projection $\Lambda$ was relatively straightforward: if $[A,\Lambda]$ is compact, then in the decomposition $\calH = \im \Lambda^\perp \oplus \im \Lambda$ we write \eql{\label{eq:Lambda-decomposition of an operator} A = \begin{bmatrix}
    A_{LL} & A_{LR} \\ A_{RL} & A_{RR}
\end{bmatrix}} and the compact condition is equivalent to the off-diagonal blocks being compact. General algebraic considerations then lead us to figure out the properties of the diagonal blocks that allow deformations.

To follow with this approach in the unitary case would be difficult: one key difference is that $\Lambda$ has only two eigenspaces whereas $R$ has infinitely many, so the analog of \cref{eq:Lambda-decomposition of an operator} is not clear; we explored questions of similar flavor in \cite{chung2024topological}.

Our strategy here is rather in-direct and is divided into the following steps:
\begin{enumerate}
    \item Calculate the $K$-theory of the $C^\ast$-algebra $\calL_R$, which is standard and given in \cref{sec:K-theory of the R-local algebra}. The result is $K_0(\calL_R)\cong \ZZ$ and $K_1(\calL_R)\cong 0$.
    \item Show that equivalence in $K$-theory implies equivalence of path-components. To do so, we choose a particular representation of $R$ and use the geometry of the problem. The geometry of local operators is discussed in \cref{sec:locality and boundedness}.
\end{enumerate}

We emphasize that, for unital $C^\ast$-algebra $\calA$, the fact that the $K$-groups $K_0(\calA)$ and $K_1(\calA)$ agree with $\pi_0(\calP(\calA))$ and $\pi_0(\calU(\calA))$ respectively is highly non-trivial and is in fact false: this is the very reason we had to restrict here to the non-trivial subset within the projections. To that end we give examples of algebras in \cref{sec:counter-examples} where there is a stark difference between $K$-equivalence and homotopy equivalence.


\subsubsection{Working with a concrete representation of $R$}\label{subsubsection:concrete rep}

To employ some geometric intuition, we choose to work with a concrete representation of $R$ (which is in fact the main motivation for studying this problem). In \cite[Prop. 2.1]{chung2024topological} we showed that any choice of representation will yield the same topological classification, as long as the unitary operator chosen has spectrum on the whole unit circle.

Consider then the Hilbert space $\ell^2(\ZZ^2)$ and the so-called Laughlin flux operator $L$ on $\ell^2(\ZZ^2)$ defined as
\eql{
    L = \frac{X_1+\ii X_2}{\left|X_1+\ii X_2\right|} = \exp\br{\ii \arg\br{X_1+\ii X_2}}\,.
} We honor Laughlin with this name due to the fact that this operator was used in writing the index formula corresponding to Laughlin's explanation of the integer quantum Hall effect quantization \cite{Bellissard_1994JMP....35.5373B}. It is clear that $\sigma(L)=\bS^1$.

\subsubsection{Notation}

From now on, we will denote $\calH$ to be the Hilbert space $\ell^2(\ZZ^2)$. Let $\calL_L$ be the space of $L$-local operators. Let $\calP(\calL_L)$ be the set of $L$-local (orthogonal) projections, and $\calU(\calL_L)$ the set of $L$-local unitary operators, and $\calG(\calL_L)$ the set of $L$-local invertible operators.

Let $P,Q\in\calP(\calL_L)$ be projections, we shall use the following equivalence classes on projections:
\begin{enumerate}
    \item Homotopy equivalence: $P\sim_h Q$ iff there is a continuous path of projections in $\calP(\calL_L)$ from $P$ to $Q$.
    \item Unitary equivalence: $P \sim_u Q$ iff there exists a unitary $V\in\calU(\calL_L)$ such that $P = V^\ast Q V$. 
    \item Muarry-von Neumann equivalence: $P\sim Q$ iff there exists an operator $V\in\calL_L$ such that $P=V^\ast V$ and $Q=VV^\ast$.
\end{enumerate} Recall, from \cite[Proposition 2.2.7]{rordam2000introduction}, that the first condition implies the second, which in turn implies the third.

For unitaries, if $U,V\in\calU(\calL_L)$, then we write $U\sim_h V$ iff there exists a continuous path within $\calU(\calL_L)$ from $U$ to $V$.

We denote by $M_n(\calL_L)$ the matrix algebra of $n\times n$ matrices with entries in $\calL_L$, itself a $C^\ast$-algebra. Let $\calP_n(\calL_L):=\calP(M_n(\calL_L))$ and $\calU_n(\calL_L):=\calU(M_n(\calL_L))$. Let $\calP_\infty(\calL_L)$ be the disjoint union of $\calP_n(\calL_L)$ for all $n\in\NN$, and $\calU_\infty(\calL_L)$ be the disjoint union of $\calU_n(\calL_L)$ for all $n\in\NN$.

For $P\in\calP_n(\calL_L)$ and $Q\in\calP_m(\calL_L)$, we write $P\sim_0 Q$ if there exists some partial isometry $V\in M_{m,n}(\calL_L)$ such that $P=V^\ast V$ and $Q=VV^\ast$. Here $M_{m,n}(\calL_L)$ is the set of $m\times n$ matrices with entries in $\calL_L$. We say that two projections $P,Q\in\calP_\infty(\calL_L)$ are stably equivalent, denoted $P\sim_s Q$, iff there exists some $n\in\NN$ such that $P\oplus \Id_n \sim_0 Q\oplus \Id_n$.



\begin{defn}[Projections onto subsets of $\ZZ^2$ lattice]\label{def:proj onto subsets of Z2}
If $S\subset \ZZ^2$ is a subset of lattice points, we denote by $\Lambda_S$ the projection operator \eql{\Lambda_S=\sum_{x\in S}\delta_x \otimes \delta_x^\ast\,.} For convenience, we define the following subsets of $\ZZ^2$:
\begin{enumerate}
\item If $J\subset \bS^1$ is an interval of the circle, we denote $C_J:= \Set{x\in\ZZ^2 | \arg(x)\in J}$. This is the \emph{cone} defined by the arc $J$.
\item Let $r>0$ be a positive number. We denote ${B_r} := \Set{x\in \ZZ^2 | \|x\|< r}$, the open $r$-ball.
\item For $\vf\in\ell^2(\ZZ^2)$, define $\supp(\vf):=\Set{x\in\ZZ^2 | \vf_x\neq 0}$.
\end{enumerate}
If $J\subset\bS^1$ is an interval of the circle, we will denote
\eq{
    \Lambda_{J} := \Lambda_{C_J}\equiv \sum_{x\in C_J}\delta_x\otimes \delta_x^\ast\,.
}
\end{defn}

\begin{defn}[Angles of rational slopes]
We define $\bS^1_\QQ\subset \bS^1$ as the set of points $z\in\CC$ with $|z|=1$ such that $\tan\br{\operatorname{Arg}(z)}\in\QQ$. Let $z_1,z_2\in\bS^1_\QQ$ be two points. We write $[z_1,z_2]\subset\bS^1_\QQ$ to be those points in $\bS^1_\QQ$ swept from $z_1$ to $z_2$ (inclusive) in a counter-clockwise fashion.
\end{defn}

\section{\texorpdfstring{The proof of \cref{thm:R-local unitaries are path-connected}: classification of unitaries }{}}\label{sec:unitaries classification proof}


\cref{thm:R-local unitaries are path-connected} is equivalent to $U\sim_h\Id$ for all $U\in\calU(\calL_L)$. Our general strategy is as follows: We seek a special projection $P$ for which we may construct a homotopy 
\eql{\label{eq:unitary first homotopy}
    U\sim_h W = P + P^\perp WP^\perp
} for some $W\in\calU(\calL_L)$. 
The art will be in choosing $P$ to be ``sufficiently infinite'' as it were. If we can manage this with $P=\Id$, then we are done, but this is too difficult, so we break up this task. A more modest goal is the following. Seek a projection $P$ which is sufficiently infinite in the sense that,
\eql{\label{eq:infintie projections}
P\oplus \Id_n \sim_0 P\qquad(n\in\NN)\,.
}
By the above, this is equivalent to: for every $n\in\NN$ there exists some partial isometry $V$ such that \eql{ P = V^\ast V \,,\qquad P\oplus\Id_n = VV^\ast \,.}
Heuristically, this means we can compress any ``amplification'' of $\im P$ by $n$-copies of $\calH$ back into $\im P$. It turns out that, indeed, with this property, we have
\begin{prop} \label{prop:block unitary homotopy}
If $P\in\calP(\calL_L)$ obeys \cref{eq:infintie projections} and $U\in\calU(\calL_L)$ is of the form 
\eql{\label{eq:U acts as identity within an infinite projection}
U=P+P^\perp UP^\perp
}
then $U \sim_h \Id$.
\end{prop}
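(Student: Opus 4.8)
The plan is to deform $U$ in two stages: first push its non‑trivial part off $\im P^\perp$ and into the absorbing subspace $\im P$, then eliminate it there by an Eilenberg–swindle. None of this uses the $K$‑theory of $\calL_L$; that input enters only later, when reducing a general unitary to the special form considered here.

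\emph{Stage 1 (moving the obstruction into $\im P$).} In the decomposition $\calH = \im P \oplus \im P^\perp$ we have $U = \Id_{\im P} \oplus w$ with $w := P^\perp U P^\perp$ unitary on $\im P^\perp$ and $U \in \calL_L$. Applying \cref{eq:infintie projections} with $n=1$ yields an $L$-local partial isometry $\iota$ with $\iota^\ast\iota = \Id$ and $\iota\iota^\ast \le P$; its restriction $t := \iota P^\perp$ is $L$-local with $t^\ast t = P^\perp$ and $f := t t^\ast \le P$, so $\im f$ is an $L$-local isometric copy of $\im P^\perp$ sitting inside $\im P$. Conjugating $\Id_{\im f} \oplus w$ by the norm‑continuous $L$-local rotation
\eq{
\theta \longmapsto \begin{pmatrix} \cos\theta\, f & -\sin\theta\, t \\ \sin\theta\, t^\ast & \cos\theta\, P^\perp \end{pmatrix}, \qquad \theta \in [0,\tfrac{\pi}{2}],
}
on $\im f \oplus \im P^\perp$ and extending by $\Id$ on $\im(P-f)$, one deforms $U$ inside $\calU(\calL_L)$ to $U_1 := W_1 + P^\perp$, where $W_1 := (P-f) + t w t^\ast$ is a unitary of the corner algebra, $W_1\in\calU(P\calL_L P)$. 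A path from $W_1$ to $P$ in $\calU(P\calL_L P)$ gives, upon adding $P^\perp$, a path from $U_1$ to $\Id$ in $\calU(\calL_L)$, so it suffices to prove $W_1 \sim_h P$ in $\calU(P\calL_L P)$.

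\emph{Stage 2 (the swindle).} Iterating \cref{eq:infintie projections} and restricting to $\im P^\perp$ produces infinitely many $L$-local partial isometries $t_i$, $i\in\NN$, with $t_i^\ast t_i = P^\perp$ and pairwise orthogonal ranges $f_i := t_i t_i^\ast \le P - f$. Writing $\omega := t w t^\ast$ (a unitary on $\im f$), the unitary $W_1$ acts as $\omega \oplus \Id \oplus \Id \oplus \cdots$ on $\im f \oplus \bigoplus_i \im f_i$ and as $\Id$ on the rest of $\im P$. Transporting the elementary homotopy $\diag(\omega^\ast,\omega) \sim_h \Id_2$ — valid in $\calU_2$ of any unital $C^\ast$-algebra, with an explicit rotation‑based path — onto the slot pairs $(\im f_{2k-1},\im f_{2k})$ via the $t_i$'s, one deforms $W_1$ to the alternating unitary $\omega \oplus \omega^\ast \oplus \omega \oplus \omega^\ast \oplus \cdots$; regrouping instead into the pairs $(\im f,\im f_1),(\im f_2,\im f_3),\dots$, each block is $\diag(\omega,\omega^\ast)\sim_h\Id_2$, so the alternating unitary is homotopic to $\Id_{\im P} = P$. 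Chaining the two deformations gives $W_1 \sim_h P$, hence $U \sim_h \Id$.

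\emph{The main obstacle.} The algebra above is routine; the real difficulty is that each intermediate operator of the swindle is an \emph{infinite} block‑diagonal sum $\bigoplus_k B_k^{(\theta)}$ with $B_k^{(\theta)} \in \calL_L$ supported on $\im f_{2k-1} \oplus \im f_{2k}$, and such a sum lies in $\calL_L$ only when $\norm{[B_k^{(\theta)},L]} \to 0$ as $k \to \infty$, uniformly in $\theta$ — which fails for an arbitrary choice of the $f_i$, since every $B_k^{(\theta)}$ involves $w$ and $[w,L]$ need not vanish. Exploiting that (in the concrete representation) $L$ is the multiplication operator $e^{\ii\arg(\cdot)}$, one must place the auxiliary copies $f_i \subseteq \im(P-f)$ so that $L$ is nearly constant on each $\im f_i$ — e.g.\ in angular sectors of width shrinking to $0$ and running to radial infinity, away from the branch cut of $\arg$ — making operators on $\im f_i \oplus \im f_{i+1}$ almost commute with $L$, so the tails of the swindle become norm‑convergent sums of compacts. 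Carrying this out while keeping the $f_i$ inside $\im P$ and compatible with \cref{eq:infintie projections}, with the required uniformity in the homotopy parameter, is the heart of the proof, and is what I would relegate to the technical section using the locality estimates of \cref{sec:locality and boundedness}. (A finite variant — compressing an $M_{n+1}$-level homotopy back to $\calU(\calL_L)$ via \cref{eq:infintie projections} — merely trades the infinite sum for a single gap of the same nature and does not circumvent the obstacle.)
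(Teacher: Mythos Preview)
Your approach diverges from the paper's in a fundamental way, and the divergence stems from a false premise. You assert that ``none of this uses the $K$-theory of $\calL_L$; that input enters only later,'' but in fact the paper's proof of this proposition \emph{does} invoke $K$-theory: it uses \cref{cor:consequence of trivial K_1 group for L-local algebra} (a direct consequence of $K_1(\calL_L)=0$) to obtain a homotopy $W_t$ from $U\oplus\Id_n$ to $\Id_{n+1}$ in $\calU_{n+1}(\calL_L)$, and then compresses it back to $\calU(\calL_L)$ via $Z_t := VW_tV^\ast + (\Id - VV^\ast)$, where $V = \begin{bmatrix}P^\perp & 0_{1,n}\end{bmatrix} + T \in M_{1,n+1}(\calL_L)$ is built from the partial isometry $T$ supplied by \cref{eq:infintie projections}.

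This is precisely the ``finite variant'' you dismiss in your parenthetical, and your dismissal is incorrect. There is no gap: $V$ has entries in $\calL_L$ by hypothesis, $W_t$ has entries in $\calL_L$ by construction, and so $VW_tV^\ast$ is a \emph{finite} sum of products of $L$-local operators, hence $L$-local for every $t$. No convergence estimate and no geometric control on $P$ is needed. The point of \cref{eq:infintie projections} is exactly that the absorption is implemented by an $L$-local $T$, which is what makes the compression automatic.

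By contrast, your Stage~2 swindle has a genuine obstruction that you correctly identify but cannot repair within the stated hypotheses. The proposition is asserted for an \emph{arbitrary} $P\in\calP(\calL_L)$ obeying \cref{eq:infintie projections}; there is no reason $\im(P-f)$ contains position-diagonal subprojections supported in narrowing angular sectors, or indeed any subspaces on which $L$ is nearly scalar. Your proposed fix (``place the auxiliary copies $f_i$ in angular sectors of shrinking width'') presupposes geometric structure on $P$ that is simply absent. So as a proof of the proposition as stated, the swindle is incomplete, and the deferral to \cref{sec:locality and boundedness} does not help: those estimates concern how a fixed $L$-local operator spreads across cones, not how to locate nice subspaces inside an abstract $\im P$.
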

The proof of this proposition is rather standard and will be given later in \cref{pf:prop:block unitary homotopy}. We thus further explain how to find the special projection $P$ and how to setup the homotopy \cref{eq:unitary first homotopy}.

The following lemma gives a concrete class of projections, diagonal in the position basis, that are ``sufficiently infinite'' in the sense of \cref{eq:infintie projections}, while still being ``small'' enough for us to construct a homotopy \cref{eq:unitary first homotopy}.

\begin{lem}\label{lem:class of infinite projections}
Let $S\subseteq\ZZ^2$ be such that for any $z\in\bS^1_\QQ$, there exists $x\in S$ such that $\arg(x) =\arg(z)$. Then $\Lambda_S$ (as in \cref{def:proj onto subsets of Z2}) satisfies \cref{eq:infintie projections}.
\end{lem}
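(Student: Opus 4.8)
The plan is to exhibit, for each $n\in\NN$, an explicit \quotes{permutation} partial isometry implementing $\Lambda_S\sim_0\Lambda_S\oplus\Id_n$; as $\sim_0$ is symmetric this is precisely \cref{eq:infintie projections}. Fix $n$ and pass to the ambient space $\widetilde\calH:=\ell^2(\ZZ^2\times\{0,1,\dots,n\})=\calH^{\oplus(n+1)}$ with the diagonal unitary $\widetilde L:=\diag(L,\dots,L)$, whose eigenvalue at the site $(x,j)$ is $\ee^{\ii\arg x}$ (assign an arbitrary direction at the finitely many points where $\arg$ is undefined). Since a block operator is compact iff each of its blocks is, $M_{n+1}(\calL_L)$ is identified with the operators on $\widetilde\calH$ essentially commuting with $\widetilde L$, and $\Lambda_S\oplus\Id_n$ corresponds to $\Lambda_{S'}$ with $S':=(S\times\{0\})\sqcup(\ZZ^2\times\{1,\dots,n\})\subseteq\ZZ^2\times\{0,\dots,n\}$. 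So it suffices to find a partial isometry $W:\calH\to\widetilde\calH$ with $W^\ast W=\Lambda_S$, $WW^\ast=\Lambda_{S'}$, and with each of its $n+1$ matrix components (maps $\calH\to\calH$) essentially commuting with $L$; equivalently $W\in M_{n+1,1}(\calL_L)$.

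Call a set of lattice points (inside $\ZZ^2$ or inside $\ZZ^2\times\{0,\dots,n\}$) \emph{angularly dense} if every nonempty open arc $J\subseteq\bS^1$ contains the direction of infinitely many of its points, where the direction of $(x,j)$ means $\arg x$. Three observations: $S$ is angularly dense --- $\bS^1_\QQ$ is dense in $\bS^1$, so $J$ contains infinitely many $z\in\bS^1_\QQ$, and the hypothesis supplies a point of $S$ in each such direction, distinct points for distinct directions; $\ZZ^2\times G$ is angularly dense for any nonempty finite $G$; and a finite disjoint union of angularly dense sets is angularly dense. Hence $S'$ is angularly dense. An angularly dense set is countably infinite and, having finite norm balls, can be enumerated in non-decreasing order of norm (with $\|(x,j)\|:=\|x\|$).

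The combinatorial heart --- and the only genuine obstacle --- is: \emph{if $A\subseteq\ZZ^2$ and $B\subseteq\ZZ^2\times\{0,\dots,n\}$ are both angularly dense, there is a bijection $\psi:A\to B$ such that, for every $\ve>0$, the set of $a\in A$ with $\dist_{\bS^1}(\arg a,\arg\psi(a))\ge\ve$ is finite.} I would prove this by a back-and-forth construction with shrinking angular tolerance. Enumerate $A=\{a_1,a_2,\dots\}$ and $B=\{b_1,b_2,\dots\}$ and build finite partial injections $\varnothing=\psi_0\subseteq\psi_1\subseteq\cdots$ with $\#\psi_s=s$: at stage $s$ put $\ve_s:=1/\lceil s/2\rceil$; if $s$ is odd, match the least-indexed $a_i\notin\operatorname{dom}\psi_{s-1}$ to some still-unused $b\in B$ with $\dist_{\bS^1}(\arg a_i,\arg b)<\ve_s$ --- available because angular density makes $\{b\in B:\dist_{\bS^1}(\arg a_i,\arg b)<\ve_s\}$ infinite while only finitely many $b$ have been used; if $s$ is even, perform the symmetric step for the least-indexed $b_j\notin\operatorname{ran}\psi_{s-1}$. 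Then $\psi:=\bigcup_s\psi_s$ is a bijection $A\to B$, and given $\ve>0$, every pair created at a stage $s$ with $\ve_s<\ve$ has angular distance $<\ve$ whereas only finitely many pairs precede it, which proves the claim. (It is the \emph{infinitely-many-points-per-arc} strength of angular density that makes each stage executable --- precisely what the density of $\bS^1_\QQ$ yields from the hypothesis on $S$.)

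Granting the claim, apply it with $A=S$, $B=S'$ to get $\psi$, and define $W$ by $W\delta_x:=\delta_{\psi(x)}$ for $x\in S$ and $W\delta_x:=0$ otherwise; then $W^\ast W=\Lambda_S$ and $WW^\ast=\Lambda_{S'}$ are immediate. Fix $j\in\{0,\dots,n\}$ and let $W_j:\calH\to\calH$ be the $j$-th matrix component of $W$, so $W_j\delta_x=\delta_y$ whenever $x\in S$ with $\psi(x)=(y,j)$ and $W_j\delta_x=0$ otherwise; then $[W_j,L]\delta_x=(\ee^{\ii\arg x}-\ee^{\ii\arg y})\delta_y$ on those $x$ and $0$ elsewhere, and by the claim only finitely many of these coefficients exceed any prescribed $\ve>0$ in modulus, so $[W_j,L]$ is an operator-norm limit of finite-rank operators, hence compact. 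Thus $W\in M_{n+1,1}(\calL_L)$ and $\Lambda_S\sim_0\Lambda_{S'}=\Lambda_S\oplus\Id_n$; since $n\in\NN$ was arbitrary, $\Lambda_S$ satisfies \cref{eq:infintie projections}.
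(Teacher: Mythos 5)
Your proof is correct, and it takes a genuinely different route from the paper's on both substantive steps. To build the matching, the paper enumerates $S\sqcup(\ZZ^2)^n$ as $\{y_k\}$, then greedily assigns to $y_k$ a \emph{norm-minimal} as-yet-unused $x_k\in S$ with $\arg(x_k)$ inside the shrinking arc $[\arg y_k-2^{-k},\arg y_k+2^{-k}]$; injectivity is by construction and surjectivity is extracted from the norm-minimization (any unused $x^\ast$ would eventually be forced, since only finitely many points of $S$ lie inside any ball). You instead run a back-and-forth with shrinking tolerance $\ve_s=1/\lceil s/2\rceil$, which hands you the bijection outright without needing the norm-minimality argument. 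To verify $L$-locality, the paper invokes the cone characterization (Theorem \ref{eq:cone characterization of L locality}): it decomposes $V$ into stacks $V_l$ and shows $\Lambda_J V_l\Lambda_I$ is finite-rank for disjoint closed arcs $I,J$, using that eventually the angular drift is smaller than the gap between $I$ and $J$. You instead compute $[W_j,L]$ directly in the position basis: because $L$ is diagonal, $[W_j,L]$ is an orthogonal sum of rank-one operators with coefficients $\ee^{\ii\arg x}-\ee^{\ii\arg\psi(x)}$, and the shrinking-angle property of $\psi$ makes it a norm limit of finite-rank operators. This is more elementary and self-contained --- it does not rely on the cone characterization imported from \cite{chung2024topological} --- though it is specific to operators defined pointwise on the lattice basis, which is all one needs here. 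One cosmetic note: the remark that an angularly dense set can be enumerated by non-decreasing norm is vestigial in your write-up (the back-and-forth uses an arbitrary enumeration); that finite-ball observation is exactly what the paper's greedy surjectivity argument needs and what your approach avoids.
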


We consider a sequence $\{x_k\}_{k\in\NN}$ of ``centers'' on $\ZZ^2$ that satisfies the premise of \cref{lem:class of infinite projections}. For a given $L$-local operator $U$, we would like the support of $U \delta_{x_k}$ to be around  the point $x_k$ itself.
By removing matrix elements of $U$ (with respect to the standard basis on $\ell^2(\ZZ^2)$) that are small in operator norm, we may deform $U\sim_h G$ via straight-line homotopy to an invertible operator $G$ where the action of $G$ on $\delta_{x_k}$ is around $x_k$.
We can then build a $L$-local operator $V$ that reverts the action of $G\delta_{x_k}$ back to $\delta_{x_k}$. In other words, by construction, the operator $VG$ would act as identity on the centers $\{x_k\}_{k\in\NN}$. 
Moreover, since the operator $V$ acts non-trivially only around the centers $\{x_k\}_{k\in\NN}$, it would follow that the path $V\sim_h \Id$ can be constructed straightforwardly.
This leads to
\eql{\label{eq:homotopy with V}
    U\sim_h G \sim_h VG \sim_h  P + P^\perp W P^\perp
}
which yields the homotopy \cref{eq:unitary first homotopy}. The last hotomopy in \cref{eq:homotopy with V} entails some minor manipulation to get to the final form.

Here we describe the centers more precisely.

\begin{lem}\label{lem:localized centers}
Let $A$ be $L$-local and $\ve>0$. Let $\{\theta_k\}_{k\in\NN}$ be a sequence of radians of $\bS^1_\QQ$. Then there exists a $L$-local operator $B$ such that $\|A-B\|\leq \ve$, and a sequence of centers $\{x_k\}_{k\in\NN}$ with $\arg(x_k)=\theta_k$ for all $k\in\NN$ such that if we define
\eql{\label{eq:interaction range of B}
    Y_k := \supp(B \delta_{x_k}) \cup \supp(\delta_{x_k}) \subset \ZZ^2
 }
to be the interaction range of $B$ on $\delta_{x_k}$, then $\{Y_k\}_{k\in\NN}$ satisfies the following.
\begin{enumerate}
\item $|Y_k|<\infty$ for all $k\in\NN$;
\item $\{Y_k\}_{k\in \NN}$ are pairwise disjoint.
\item For any two closed disjoint intervals $I,J$ of $\bS^1_\QQ$, we have
\eql{\label{eq:finite mixing between two}
    |\bigcup \Set{Y_k| k\in\NN,\ Y_k\cap C_I\neq \varnothing,\ Y_k\cap C_J\neq \varnothing}| < \infty \,.
}
\end{enumerate}
\end{lem}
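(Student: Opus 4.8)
The plan is to build $B$ as a finite-range truncation of $A$ and then extract the centers $\{x_k\}$ greedily, one index at a time, using the fact that $A$ (hence $B$) is $L$-local to control interaction ranges. First I would use the characterization of locality (from the appendix on locality and boundedness): since $[A,L]$ is compact, the matrix elements $A_{xy} = \ip{\delta_x}{A\delta_y}$ decay in an appropriate sense away from the ``diagonal'' $\arg(x) = \arg(y)$; more precisely, for each $\delta>0$ the set of pairs $(x,y)$ with $|A_{xy}| \geq \delta$ and $x,y$ far apart in \emph{angle} can be controlled. Concretely, define $B^{(\delta)}$ by zeroing out all matrix elements of $A$ of magnitude below $\delta$ that connect sufficiently angularly-separated points; a diagonal/Schur-test argument shows $\norm{A - B^{(\delta)}}$ is small once the angular window is chosen generously, giving the approximation $\norm{A-B}\leq\ve$. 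The point is that $B\delta_x$ is then, up to arbitrarily small error, supported in a cone around $\arg(x)$; since we still want \emph{exact} finiteness of $Y_k$ we additionally truncate each column $B\delta_{x_k}$ in radius, which is a compact (finite-rank) perturbation we can absorb, provided we only do it on a sparse set of columns --- this is why the centers must be chosen first.

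Next I would construct the centers iteratively. Enumerate $\NN$ and, having chosen $x_1,\dots,x_{k-1}$ with finite interaction ranges $Y_1,\dots,Y_{k-1}$, pick $x_k$ with $\arg(x_k)=\theta_k$ but $\norm{x_k}$ so large that (i) the (essentially conical, radius-truncated) interaction range of $B$ on $\delta_{x_k}$ misses $Y_1\cup\dots\cup Y_{k-1}$, and (ii) it also misses the cones $C_I, C_J$ for all the ``relevant'' pairs of rational intervals encountered so far. Item (i) gives pairwise disjointness (1)--(2). For item (3): fix disjoint closed intervals $I,J\subset\bS^1_\QQ$; the key observation is that if $Y_k$ meets both $C_I$ and $C_J$, then since $Y_k$ sits in a narrow cone around $\arg(x_k)=\theta_k$ of angular width shrinking with $\norm{x_k}$, this forces $\theta_k$ to lie within a bounded distance of \emph{both} $I$ and $J$ --- and since $I,J$ are disjoint and closed, that confines $\theta_k$ to a fixed compact region and confines $Y_k$ to a fixed bounded subset of $\ZZ^2$. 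Only finitely many $x_k$ can have centers in that region (the $\theta_k$ there accumulate but the radii $\norm{x_k}$ in the construction grow, so for $k$ large the cone around $x_k$ is too thin to reach both $C_I$ and $C_J$), so the union in \eqref{eq:finite mixing between two} is finite. I would make ``narrow cone of width shrinking with radius'' precise by choosing, at stage $k$, the truncation parameter $\delta_k$ and radius cutoff so aggressively that $B\delta_{x_k}$ is supported within angular distance $\le 1/k$ (say) of $\theta_k$; the resulting total perturbation is still $\le\ve$ by a summable bound $\sum_k \delta_k$ controlled at the outset.

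The main obstacle I anticipate is reconciling two competing demands on the truncation: to get \emph{exactly} finite $Y_k$ we must cut each chosen column in radius, but doing this simultaneously to all columns could destroy $L$-locality or ruin the norm estimate; the resolution is that we only ever truncate the countably many columns $\delta_{x_k}$, which is a perturbation supported on a set that is ``thin'' enough (contained in $\bigcup_k Y_k$, a set meeting each cone in a sparse way) to remain compact-modulo-small, and we interleave the choice of $x_k$ with the choice of truncation parameters so that each new truncation only affects a region disjoint from all previous $Y_j$. A secondary subtlety is verifying that the angular-decay statement for $L$-local operators is strong enough to support ``angular width $\le 1/k$'': this should follow from the appendix, using that $\arg(L)$ generates multiplication by $\arg$ and that $[A,L]$ compact translates into decay of $A_{xy}$ as $|\arg(x)-\arg(y)|$ stays bounded away from $0$ while $\norm{x},\norm{y}\to\infty$. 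I would isolate that decay statement as the one genuinely technical input and defer its proof to \cref{sec:details of the proof}.
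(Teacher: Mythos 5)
Your high-level strategy matches the paper's broad outline --- pick centers $\{x_k\}$ at increasing radii, truncate $A$ to get $B$, and then argue that the interaction ranges $Y_k$ are small and well-separated --- but there are two genuine gaps in the mechanism you propose.

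First, the norm estimate for your truncation is not established. The global step "zero out all matrix elements of magnitude $<\delta$ that connect angularly-separated points, Schur test gives small norm change" doesn't work: $L$-locality is a statement that $\Lambda_I A \Lambda_J$ is \emph{compact} for disjoint closed cones, which carries no summability or pointwise-decay information on $A_{xy}$; there can be infinitely many such elements in a single column, and a Schur bound requires $\ell^1$ control you don't have. The column-only fallback has a similar issue: writing $A-B=\sum_k v_k\otimes\delta_{x_k}^\ast$ with $v_k$ the removed part of column $x_k$, the $v_k$ are \emph{not} mutually orthogonal (they live outside disjoint annuli, hence very much can overlap, e.g.\ all in the common inner ball), so $\norm{A-B}$ is governed by the Gram matrix of the $v_k$, not by $\sup_k\norm{v_k}$; with no further care it can blow up. The paper sidesteps this with an explicit inclusion--exclusion construction (\cref{lem:turn off elements}) that kills the prescribed blocks $P_kBQ_k$ while keeping $\norm{A-B}\le\ve$ via a geometrically weighted sum, and this requires the allowed errors $\ve_k$ to shrink like $\ve/2^{2k-1}$ from the outset.

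Second, the "narrow cone of shrinking angular width" you rely on for item (3) is not achievable in the form you use it. To make $B\delta_{x_k}$ supported in $C_{[\theta_k-1/k,\theta_k+1/k]}$ up to small norm you would need $\Lambda_{J^c}A\Lambda_J$ to be small in norm with $J^c$ the \emph{open} complement, but locality only gives compactness of $\Lambda_I A\Lambda_J$ for $I$ a \emph{closed} cone disjoint from $J$; the leakage into the open strip between $J$ and any such $I$ is not controlled, and no choice of $\norm{x_k}$ fixes this uniformly. The paper's $Y_k$ therefore are \emph{not} confined to thin cones; they are confined to disjoint annuli $B_{r_k}\setminus B_{r_{k-1}}$ (via \cref{cor:contained in annulus}), which gives items (1)--(2), and item (3) is obtained instead through \cref{cor:contained in cone}: the complement of each cone $C_{J_l}$ is split into a ``bad'' set (turned off in $B$) and a ``good'' set $E_l^g$ with finite intersection with any disjoint cone, and the mixing bound is then proved by first establishing the one-directional estimate \cref{eq:finite one directional interaction} through $E_l^g$ and then doing a four-region case analysis. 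Your ``both close to $I$ and $J$ confines $\theta_k$'' step collapses this entire argument into a shortcut that does not survive the absence of the narrow-cone property.

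In short: your proposal correctly identifies that one must choose widely-separated centers and perturb $A$ slightly, but it lacks the two technical workhorses the proof actually needs --- the good/bad cone decomposition that replaces pointwise angular decay, and the inclusion--exclusion block-truncation that replaces naive column-wise surgery.
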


As a corollary to \cref{lem:localized centers}, we can construct $V$ as promised in the homotopy \cref{eq:homotopy with V}.

\begin{cor}\label{cor:V operator that revert localized centers}
Let $B$ be a $L$-local operator, and $\{x_k\}_{k\in\NN}$ be centers for $B$ such that $Y_k$ as defined in \cref{eq:interaction range of B} satisfies the items 1, 2, and 3 in \cref{lem:localized centers}. 
Then there exists $V\in\calU(\calL_L)$ such that $V\sim_h\Id$ and
\eql{\label{eq:VB block form}
    VB = P + P VB P^\perp + P^\perp VB P^\perp \,.
}
\end{cor}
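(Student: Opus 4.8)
The plan is as follows. The projection in the statement is $P:=\Lambda_{\{x_k:k\in\NN\}}$, of the type in \cref{lem:class of infinite projections}. I would build $V$ to be block diagonal for the orthogonal decomposition $\calH=\bigoplus_{k\in\NN}\ell^2(Y_k)\oplus\ell^2\br{\ZZ^2\setminus\bigcup_kY_k}$ — legitimate by items 1 and 2 of \cref{lem:localized centers} — acting as the identity on the last summand. Both $\delta_{x_k}$ and $w_k:=B\delta_{x_k}$ lie in the finite-dimensional block $\ell^2(Y_k)$; assuming $\norm{w_k}=1$ (commented on below) one may pick a unitary $V_k$ of $\ell^2(Y_k)$ with $V_kw_k=\delta_{x_k}$ and set $V:=\bigoplus_kV_k\oplus\Id$. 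Then $V$ is unitary with $VB\delta_{x_k}=\delta_{x_k}$ for all $k$, i.e.\ $VBP=P$; expanding $VB$ into its four $P$-blocks, $VBP=P$ forces $PVBP=P$ and $P^\perp VBP=0$, which is exactly \cref{eq:VB block form}. The remaining content is $V\in\calU(\calL_L)$ and $V\sim_h\Id$ within $\calU(\calL_L)$.

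Both rest on the fact $d_k:=\diam\Set{\ee^{\ii\arg(x)} | x\in Y_k}\to 0$ as $k\to\infty$. I would obtain it from item 3 of \cref{lem:localized centers} by contradiction: if $d_k\ge\eta>0$ along a subsequence, choose $a_k,b_k\in Y_k$ witnessing it, pass to a further subsequence along which $\ee^{\ii\arg(a_k)}\to\zeta$ and $\ee^{\ii\arg(b_k)}\to\zeta'$ with $\zeta\neq\zeta'$, and take disjoint closed arcs with rational endpoints $I\ni\zeta$ and $J\ni\zeta'$; then infinitely many of the non-empty, pairwise disjoint sets $Y_k$ meet both $C_I$ and $C_J$, so their union is infinite, contradicting \cref{eq:finite mixing between two}. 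Granting $d_k\to 0$: since $L$ is diagonal in the position basis it preserves each $\ell^2(Y_k)$ and the complementary summand, so $[V,L]$ is block diagonal, is zero on the complement, and on $\ell^2(Y_k)$ equals $[V_k,\,L|_{\ell^2(Y_k)}-\ee^{\ii\arg(x_k)}\Id]$, hence has norm at most $2d_k$; being the norm limit of its finite-rank truncations, $[V,L]$ is compact and $V\in\calL_L$.

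For the homotopy I would take self-adjoint logarithms $H_k=H_k^\ast$ with $V_k=\exp(\ii H_k)$ and $\norm{H_k}\le\pi$ (take the spectral logarithm valued in $(-\pi,\pi]$), and set $V(t):=\bigoplus_k\exp(\ii tH_k)\oplus\Id$ for $t\in[0,1]$. Each $V(t)$ is unitary, lies in $\calL_L$ by the same block bound $\norm{[V(t),L]|_{\ell^2(Y_k)}}\le 2d_k$, and $\norm{V(t)-V(s)}=\sup_k\norm{\exp(\ii(t-s)H_k)-\Id}\le\pi|t-s|$, so $t\mapsto V(t)$ is norm continuous; as $V(0)=\Id$ and $V(1)=V$, this is the desired path in $\calU(\calL_L)$. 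The normalization $\norm{B\delta_{x_k}}=1$ used above is inherited from the construction in \cref{sec:unitaries classification proof}: there $B$ is the invertible operator $G$ obtained from the unitary $U$, whose finitely-supported columns $G\delta_{x_k}$ we are free to take to be unit vectors.

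I expect the crux to be making the path $V(\cdot)$ simultaneously norm continuous and $L$-local at every time: the off-the-shelf appeal to path-connectedness of $\mathrm{U}(Y_k)$ gives paths with no uniform-in-$k$ modulus of continuity, and the remedy — the uniform bound $\norm{H_k}\le\pi$ on the generators — is compatible with $L$-locality precisely because both are governed by the single geometric input $d_k\to 0$, which in turn is the only genuine use of the mixing estimate \cref{eq:finite mixing between two}.
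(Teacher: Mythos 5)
Your construction of $V$ is the same as the paper's: a block-diagonal unitary for the decomposition $\bigoplus_k \ell^2(Y_k)\oplus\ell^2\bigl(\ZZ^2\setminus\bigcup_k Y_k\bigr)$, identity on the last summand, each $V_k$ sending $B\delta_{x_k}$ to $\delta_{x_k}$; and your reduction of \cref{eq:VB block form} to $VBP=P$ is the same bookkeeping. Where you genuinely diverge is in how you verify $V\in\calL_L$ and how you produce the path. The paper verifies the cone criterion of \cref{eq:cone characterization of L locality} directly, showing $\Lambda_J V\Lambda_I$ is finite rank by splitting $\ZZ^2$ into four pieces and invoking \cref{eq:finite mixing between two} on the only nontrivial piece; you instead extract from item 3 (plus item 2) the quantitative intermediate fact $d_k:=\diam\Set{\ee^{\ii\arg x}|x\in Y_k}\to 0$ and then bound $\norm{[V,L]\restriction_{\ell^2(Y_k)}}\le 2d_k$, getting compactness as a norm limit of finite-rank truncations. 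Both arguments are correct; yours buys a uniform, time-independent block estimate that then carries the whole homotopy, which is where your write-up is sharper than the paper's. The paper's proof disposes of the homotopy in one sentence (``deform each unitary matrix on each block''), which as you correctly observe does not by itself yield a norm-continuous path without a uniform modulus of continuity across the blocks; your fix — self-adjoint logarithms $H_k$ with $\sigma(H_k)\subset(-\pi,\pi]$, hence $\norm{H_k}\le\pi$, giving $\norm{V(t)-V(s)}\le\pi|t-s|$ and $L$-locality at every $t$ by the same $2d_k$ bound — is exactly the missing detail. You are also right to flag the implicit normalization $\norm{B\delta_{x_k}}=1$: the paper's phrase ``extend $B\delta_{x_k}$ to an orthonormal basis'' makes the same tacit assumption, and in the application $B=G$ is only $\ve$-close to a unitary, so strictly speaking both proofs (not just yours) need either to rescale the image vector and absorb a diagonal factor into the final block-triangular manipulation in the proof of \cref{thm:R-local unitaries are path-connected}, or to amend the corollary's statement; this is a small gap in the paper rather than a defect of your argument.
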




\begin{proof}[Proof of \cref{thm:R-local unitaries are path-connected}]
Let $U\in\calU(\calL_L)$ and $\ve$ be a small positive number. Let $\{\theta_k\}_{k\in\NN}$ be a sequence of radians in $\bS^1_\QQ$ that enumerate $\bS^1_\QQ$, i.e., for each $\theta\in\bS^1_\QQ$, there exists exactly one $k\in\NN$ such that $\theta_k=\theta$. 
Using \cref{lem:localized centers}, there exists a $L$-local operator $G$ such that $\|U-G\|\leq \ve$, and there exists localized centers $\{x_k\}_{k\in\NN}$ such that the subsets $Y_k$ as defined in \cref{eq:interaction range of B} has those properties specified in the lemma. In particular, we can choose $\ve$ is small enough so that $G\in\calG(\calL_L)$ is invertible.

Now we use \cref{cor:V operator that revert localized centers} to construct $V\in\calU(\calL_L)$ such that $V\sim_h \Id$ and $VB$ satisfies \cref{eq:VB block form}. 
Now
\eq{
    VG =P + PVGP^\perp + P^\perp VG P^\perp = (P + P^\perp VG P^\perp)(\Id + PVGP^\perp)\,.
}
Since $VG\in\calL_L$ and $[P,L]=0$, it is clear that $P + P^\perp VG P^\perp$ and $\Id + PVGP^\perp$ are both $L$-local. Observe that $(\Id + t PVGP^\perp)$ is always in $\calG(\calL_L)$ for $t\in[0,1]$, with inverse $\Id - t PVGP^\perp$. Therefore, we have $VG\sim_h P + P^\perp VG P^\perp$ in $\calG(\calL_L)$. Let $W$ be the polar part of $P + P^\perp VG P^\perp$. It follows from \cite[Proposition 2.1.8]{rordam2000introduction} that we have $P + P^\perp VG P^\perp\sim_h W$ in $\calG(\calL_L)$. 
Now $W$ is of the form $W=P + P^\perp W P$ and hence, by \cref{prop:block unitary homotopy}, we have $W\sim_h \Id$. Therefore, we have $VG\sim_h \Id$ in $\calG(\calL_L)$. Combined with $V\sim_h \Id$, we have $G\sim_h\Id$ in $\calG(\calL_L)$, and hence $U\sim_h\Id$ in $\calG(\calL_L)$. Using \cite[Proposition 2.1.8]{rordam2000introduction} again, we obtain the homotopy $U\sim_h \Id$ in $\calU(\calL_L)$ and concludes the proof.
\end{proof}

\section{\texorpdfstring{The proof of \cref{thm:classification of R-non-trivial orthogonal projections}: classification of orthogonal projections}{}}\label{sec:orthogonal projections classification proof}

We now turn to \cref{thm:classification of R-non-trivial orthogonal projections}. We denote $\calP^\NT_L$ the space of $L$-non-trivial $L$-local projections analogous to \cref{defn:non-triviality}. The index of $P\in\calP(\calN_L)$ is $\calN_L(P):=\findex(\PP L)$ (now $L$ replaces $R$ in \cref{eq:index of R-local projections}). To prove the theorem, we need to show injectivity and surjectivity of the map \cref{eq:projection index}. The surjecitivity statement 
\begin{lem}\label{lem:projection pi0 surjective}
The map \cref{eq:projection index} is surjective.
\end{lem}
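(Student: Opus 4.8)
The plan is to exhibit, for each $n \in \ZZ$, an explicit $L$-non-trivial projection $P_n \in \calP^{\NT}_L$ with $\calN_L(P_n) = n$. The natural model to keep in mind is the analogy with the Toeplitz index: if $L$ were (a unitary with) a shift-like structure, then a projection onto a half-space would produce index $\pm 1$, and amplifications would produce all of $\ZZ$. Concretely, I would first observe that it suffices to produce a single $P_1 \in \calP^{\NT}_L$ with $\calN_L(P_1) = 1$, together with a single $P_{-1}$ with $\calN_L(P_{-1}) = -1$ (take $P_{-1} = P_1^\perp$, say, if that works out, since $\findex((P^\perp)L) = -\findex(\PP L)$ modulo a check), and then build $P_n$ by a ``direct sum inside $\calH$'' construction: choose countably many pairwise-disjoint cones $C_{J_1}, \dots$ exhausting a neighborhood of every rational-slope direction, transport a copy of the index-one model into each via an $L$-local partial isometry, and add up finitely many copies. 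Additivity of the Fredholm index under such orthogonal block decompositions gives $\calN_L$ of the sum equal to the sum of the indices, hence any value in $\ZZ$; and the $R$-non-triviality condition is robust under adding a piece (since $P f(L) P$ not compact for $P \le P'$ already forces $P' f(L) P'$ not compact), so only the complementary block $(P_n)^\perp f(L) (P_n)^\perp$ needs a separate, easy check when $P_n$ is built from a ``thin'' union of cones.

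The core of the argument is therefore the construction of $P_1$. Here I would use the concrete Laughlin representation $L = \exp(\ii\arg(X_1+\ii X_2))$ on $\ell^2(\ZZ^2)$. A good candidate is a projection that ``looks like'' the spectral projection of a Dirac-type operator paired with $L$: e.g., work in a single cone $C_J$ with $J$ a half-circle, identify $\ell^2(C_J)$ with $\ell^2$ of a half-plane, and on that half-plane take $P_1$ to be (conjugate to) the Hardy/Toeplitz projection in the angular variable so that $P_1 L P_1 + P_1^\perp$ restricted to $\im P_1$ is a Fredholm operator of index $1$. Since $[\Lambda_{C_J}, L]$ is compact (locality of the cone projection — this is exactly the kind of statement established in \cref{sec:locality and boundedness}) and $[P_1, L]$ is compact by the same angular-localization estimate, $P_1$ is $L$-local. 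The index computation $\calN_L(P_1) = \findex(P_1 L P_1 + P_1^\perp) = 1$ then reduces, after peeling off the $P_1^\perp$ summand which contributes index $0$, to a one-dimensional Toeplitz index of a winding-number-one symbol, which is classical.

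The step I expect to be the main obstacle is verifying the $R$-non-triviality (Definition \ref{defn:non-triviality}) of the model $P_1$ — that is, showing $P_1 f(L) P_1$ and $P_1^\perp f(L) P_1^\perp$ are \emph{non}-compact for every continuous nonzero $f:\bS^1\to\CC$. Non-compactness of $P_1 f(L) P_1$ amounts to saying $\im P_1$ still ``sees'' the whole essential spectrum of $L$, i.e.\ $f(L)$ compressed to $\im P_1$ has $f$ in its essential range; one has to choose $P_1$ large enough in every spectral direction of $L$ (this is why the hypothesis of \cref{lem:class of infinite projections}, meeting every rational-slope angle, reappears here) while still being small enough to be $L$-local and to carry a nonzero index. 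Balancing these is the delicate point: a half-space projection is the standard way to get both, and I would argue non-compactness by producing, for each point $z_0 \in \bS^1$ and each $\delta>0$, an infinite-dimensional subspace of $\im P_1$ on which $L$ is within $\delta$ of $z_0$ (using lattice sites in $C_J$ with $\arg$ clustering near $\arg(z_0)$, which exist in abundance since $C_J$ is a full cone), whence $\|P_1 (f(L) - f(z_0)) P_1\|_{\mathrm{ess}}$ is controlled and $P_1 f(L) P_1$ cannot be compact unless $f \equiv 0$; the same for $P_1^\perp$. Everything else — Fredholmness via Atkinson, index additivity, straight-line homotopies, locality of cone projections — is routine given the earlier sections.
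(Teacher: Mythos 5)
The central construction does not work: you restrict $P_1$ to live inside a single half-plane cone, i.e.\ $\im P_1\subseteq\im\Lambda_{C_J}$ with $J$ a half-circle, and \emph{no} such projection can be $L$-non-trivial. In the Laughlin representation $L=\exp\br{\ii\arg(X_1+\ii X_2)}$ is a multiplication operator, so $f(L)$ is also multiplication, by $f(\ee^{\ii\arg x})$, for any continuous $f$. Taking $f$ nonzero but supported in $\bS^1\setminus J$ gives $f(L)\Lambda_{C_J}=0$ exactly, hence $P_1 f(L)P_1=P_1\Lambda_{C_J}f(L)\Lambda_{C_J}P_1=0$, which is compact although $f\neq0$; so $P_1$ violates \cref{defn:non-triviality}. (The variant where you instead enlarge $P_1$ to contain $\Lambda_{C_{J^c}}$ fails symmetrically, since then $P_1^\perp\leq\Lambda_{C_J}$ and the same $f$ kills $P_1^\perp f(L)P_1^\perp$.) You flag this tension yourself --- $\im P_1$ must see every angular direction while still carrying an index --- but the half-space cone does not resolve it; it exemplifies it. There is a parallel sign that the index would not come out right either: any projection diagonal in position (of the form $\Lambda_S$) commutes exactly with the multiplication operator $L$, so $\Lambda_S L\Lambda_S+\Lambda_S^\perp$ is a unitary multiplication operator and $\calN_L(\Lambda_S)=0$. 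A nonzero $\calN_L$ requires a projection that is genuinely off-diagonal \emph{in a way that wraps around all of $\bS^1$}, which a single-cone construction cannot deliver.

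The paper sidesteps all of this by never trying to write $P$ down in the Laughlin picture. It starts on $\ell^2(\ZZ)$ with the bilateral shift $R$ --- which, unlike $L$, is not diagonal in position --- and the Hardy projection $\Lambda=\sum_{x\geq1}\delta_x\otimes\delta_x^\ast$, for which $\findex(\Lambda R\Lambda+\Lambda^\perp)=-1$. Non-triviality of $\Lambda$ with respect to $R$ is proven by a translation-invariance argument: $\|f(R)\delta_x\|$ is independent of $x$, so it is either identically zero (forcing $f=0$) or bounded below, contradicting compactness of $\Lambda f(R)\Lambda$. The Weyl--von~Neumann--Berg theorem then supplies a unitary $U:\ell^2(\ZZ^2)\to\ell^2(\ZZ)$ with $L=U^\ast R U+K$ for some compact $K$; setting $P:=U^\ast\Lambda U$, locality, the index $-1$, and non-triviality all transfer through conjugation plus compact perturbation. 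Replacing $R$ by $R^{-k}$ gives every $k\in\ZZ$ directly, so no cone-by-cone amplification step is needed. The unitary $U$ is necessarily far from diagonal in the position basis, which is precisely the ingredient a direct Laughlin-side cone construction is missing.
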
 \noindent whose proof will be presented later. It relies on the Weyl-von Neuman-Berg theorem, to switch from the $L$ representation to an $R^k$ representation, where now $R$ is the concrete bilateral right shift on $\ell^2(\ZZ)$ and $k\in\ZZ$.

We are thus left with injectivity, which is tantamount to the statement that if $P,Q$ are two $L$-non-trivial projections which have the same index \cref{eq:projection index}, then there is a continuous path within the space of $L$-non-trivial projections which connects them. The hypothesis $\findex(\PP L) = \findex(\QQ L)$ already implies, via the calculation of $K_0$ performed below \cref{prop:K0 group of L local algebra}, that $[P]_0=[Q]_0$ as classes in $K_0$. In fact more is true thanks to $L$-non-triviality, as the following lemma shows.
\begin{lem}\label{lem:unitary equivalence of L-non-trivial projections}
If $P,Q\in\calP^\NT_L$ are $L$-non-trivial and have the same $\calN_L(P)=\calN_L(Q)$, then $P\sim_u Q$ in $\calP(\calL_L)$.
\end{lem}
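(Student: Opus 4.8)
The plan is to upgrade the $K_0$-equivalence $[P]_0 = [Q]_0$ to a genuine unitary equivalence $P \sim_u Q$ by exploiting $L$-non-triviality, which should play the role that "properly infinite" plays in the Cuntz-algebra style argument (this is the analog in $\calL_L$ of the statement that two projections with the same class in $K_0$ in a properly infinite algebra are unitarily equivalent, cf.\ \cite{rordam2000introduction}). First I would record what $\calN_L(P) = \calN_L(Q)$ buys us at the level of $K_0$: as noted after \cref{prop:K0 group of L local algebra}, the index map gives an isomorphism $K_0(\calL_L) \cong \ZZ$, so $[P]_0 = [Q]_0$, which by definition of $\sim_s$ means there is $n \in \NN$ with $P \oplus \Id_n \sim_0 Q \oplus \Id_n$ in $\calP_\infty(\calL_L)$. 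The heart of the matter is then a "stabilization is free" statement: $L$-non-triviality of $P$ should let us absorb the extra $\Id_n$ back into $P$ — concretely, I expect to prove $P \oplus \Id_n \sim_0 P$ for every $n$, and likewise $Q \oplus \Id_n \sim_0 Q$, so that $P \sim_0 Q$, i.e.\ $P$ and $Q$ are Murray-von Neumann equivalent in $\calL_L$.

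The mechanism for $P \oplus \Id_n \sim_0 P$ is where \cref{lem:class of infinite projections} and the $L$-non-triviality hypothesis come together. The idea: $P$ being $L$-non-trivial means $\im P$ is "spread over all rational directions" in a way detectable by $f(L)$; concretely, for each $z \in \bS^1_\QQ$ the compression of $P$ to the cone $C_{J}$ around $z$ is non-compact, so $\im P$ contains (up to compact error, hence up to a small homotopy/unitary adjustment) a copy of a sufficiently infinite diagonal projection $\Lambda_S$ as in \cref{lem:class of infinite projections}. Since $\Lambda_S$ satisfies \cref{eq:infintie projections}, i.e.\ $\Lambda_S \oplus \Id_n \sim_0 \Lambda_S$, one can carve out $\Lambda_S \leq P$ (again up to compact perturbation), absorb the $n$ amplification copies into the $\Lambda_S$ part by the associated partial isometry, and leave the complementary part of $P$ untouched; Murray-von Neumann equivalences compose and add, giving $P \oplus \Id_n \sim_0 P$. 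The technical content is showing that "$L$-non-trivial" genuinely implies the existence of such a $\Lambda_S \leq P$ up to the perturbations one is allowed — one cannot literally have $\Lambda_S \leq P$, so the cleanest route is probably: produce a partial isometry $V \in \calL_L$ with $V^* V = \Lambda_S$ and $V V^* \leq P$ directly from the non-compactness of the cone compressions (pick, in each cone, an infinite-dimensional piece of $\im P$ that is $L$-local enough), without ever demanding an honest subprojection.

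Once $P \sim_0 Q$ in $\calL_L$ is in hand, passing to $\sim_u$ is the final and comparatively soft step: a Murray-von Neumann equivalence $P = V^* V$, $Q = V V^*$ with $V \in \calL_L$ can be turned into a unitary equivalence provided both $P^\perp$ and $Q^\perp$ are also "infinite" in the same sense (so that the defects $\Id - V^*V$ on the two sides can be matched up) — and this is exactly the second half of $L$-non-triviality, which makes a symmetric statement about $P^\perp f(L) P^\perp$. So I would run the same cone-compression/$\Lambda_S$-absorption argument once more for $P^\perp$ and $Q^\perp$, then assemble the four partial isometries into a single $W \in \calU(\calL_L)$ with $W^* Q W = P$, using the standard $\begin{bmatrix} V & 1 - VV^* \\ 1 - V^*V & V^* \end{bmatrix}$-type construction adapted to $\calL_L$.

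The main obstacle I anticipate is the step "$L$-non-trivial $\Rightarrow$ $\exists$ a sufficiently infinite $\Lambda_S$ sitting (essentially) below $P$." The hypothesis is phrased spectrally — non-compactness of $P f(L) P$ for all non-zero continuous $f$ — and one must convert this into a geometric, position-space statement about $\im P$ having an infinite-dimensional, $L$-locally-controllable intersection with every rational cone $C_J$, uniformly enough that the countably many pieces can be organized into one diagonal projection $\Lambda_S$ obeying the hypothesis of \cref{lem:class of infinite projections}. This will require the locality/boundedness machinery from \cref{sec:locality and boundedness} (to trade "$P$ compressed to a cone is non-compact" for "$P$ has a unit vector essentially supported deep inside that cone") and a careful bookkeeping to keep the resulting $V$ genuinely $L$-local while its range pieces stay disjoint across cones — essentially the projection-side analog of the centers construction in \cref{lem:localized centers,cor:V operator that revert localized centers}. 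Everything downstream of that — the absorption of amplifications, composing Murray-von Neumann equivalences, and the final passage $\sim_0 \Rightarrow \sim_u$ — I expect to be routine $C^\ast$-algebra bookkeeping.
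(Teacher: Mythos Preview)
Your outline is correct and matches the paper's proof exactly in structure: same $K_0$ class $\Rightarrow$ stably equivalent $\Rightarrow$ Murray--von Neumann equivalent (using $L$-non-triviality), then repeat for $P^\perp,Q^\perp$ and combine to get $\sim_u$. The difference is entirely in the middle step. The paper does not attempt your geometric construction of a $\Lambda_S$ sitting inside $\im P$; instead it observes (cf.\ the remark after \cref{defn:non-triviality}) that $L$-non-triviality is \emph{precisely} the ampleness condition for projections in the Paschke dual algebra, and then invokes \cite[Proposition 5.1.4]{higson2000analytic} as a black box to pass directly from $P\sim_s Q$ to $P\sim Q$. So the ``main obstacle'' you identify --- converting the spectral hypothesis $Pf(L)P\notin\calK$ into a position-space embedding of a sufficiently-infinite $\Lambda_S$ below $P$ via an $L$-local partial isometry --- simply does not arise in the paper's argument. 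Your route is plausible and would essentially reprove the relevant part of Higson--Roe in this concrete setting, but it is substantially more work than citing it, and your sketch does not actually close that gap. For the final passage $P\sim Q$, $P^\perp\sim Q^\perp \Rightarrow P\sim_u Q$, the paper just quotes \cite[Proposition 2.2.2]{rordam2000introduction}; note that the unitary there is simply $V+W$ (with $V^\ast V=P$, $VV^\ast=Q$, $W^\ast W=P^\perp$, $WW^\ast=Q^\perp$), not the $2\times 2$ matrix you wrote, which lives in $M_2(\calL_L)$ rather than $\calL_L$.
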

\begin{proof}
Using \cref{prop:K0 group of L local algebra}, if $\calN_L(P)=\calN_L(Q)$, then $P,Q$ belong to the same $K_0$ class. It follows from \cite[Proposition 3.1.7]{rordam2000introduction} that $P\sim_s Q$. More is true. Since $P,Q$ are $L$-non-trivial, it follows from \cite[Proposition 5.1.4]{higson2000analytic} that we have $P\sim Q$. On the other hand, we have $\calN_L(P^\perp)=-\calN_L(P)=-\calN_L(Q)=\calN_L(Q^\perp)$. Using the same argument as before, we have $P^\perp \sim Q^\perp$. Using \cite[Proposition 2.2.2]{rordam2000introduction}, we conclude that $P\sim_u Q$. 
\end{proof}

Armed with this lemma and \cref{thm:R-local unitaries are path-connected}, we now are able to connect any two non-trivial projections with a path within $L$-local projections. However, this is not the end of the story, since the path must be $L$-non-trivial also. To that end, we use
\begin{lem}\label{lem:L-non-trivial are path components}
If $P\in\calP(\calL_L)$ and $P\sim_h Q$ in $\calP(\calL_L)$ for some $Q\in\calP^\NT_L$, then $P\in\calP^\NT_L$.
\end{lem}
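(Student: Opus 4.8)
The plan is to show that the property of being $L$-non-trivial is invariant along continuous paths of $L$-local projections, so that $\calP^\NT_L$ is a union of path-connected components of $\calP(\calL_L)$; this immediately yields the claim since $P$ and $Q$ lie on a common path. Recall that $P$ is $L$-non-trivial iff for every continuous non-zero $f:\bS^1\to\CC$, neither $P f(L) P$ nor $P^\perp f(L) P^\perp$ is compact. The natural reformulation is in terms of the Calkin algebra: writing $\pi:\calB(\calH)\to\calB(\calH)/\calK(\calH)$ for the quotient map, $P$ is $L$-non-trivial iff for all non-zero continuous $f$ we have $\pi(P)\pi(f(L))\pi(P)\neq 0$ and $\pi(P^\perp)\pi(f(L))\pi(P^\perp)\neq 0$. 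The key observation is that, because $[P,L]\in\calK(\calH)$, the image $\pi(P)$ commutes with the commutative $C^\ast$-algebra $\pi(C^\ast(L))\cong C(\bS^1)$ (here using $\sigma_{\mathrm{ess}}(L)=\bS^1$), so $\pi(P)\pi(f(L))\pi(P) = \pi(P f(L))$ and the non-triviality condition says: for every non-empty open $J\subseteq\bS^1$, the spectral subspace cut out by $P$ and by $\chi_J(L)$ (interpreted in the Calkin algebra) is non-zero — equivalently, $\pi(P)\pi(g(L))\neq 0$ for every non-zero continuous $g$ supported in $J$.

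The main step is then a semicontinuity/openness argument. Suppose $t\mapsto P_t$ is a norm-continuous path in $\calP(\calL_L)$ with $P_0=Q\in\calP^\NT_L$. I claim the set $T:=\{t:P_t\in\calP^\NT_L\}$ is both open and closed in $[0,1]$, hence all of it. For closedness: if $P_t\to P_{t_0}$ in norm and each $P_t$ is $L$-non-trivial, fix a non-zero continuous $f$; since $\|P_t f(L) P_t - P_{t_0}f(L)P_{t_0}\|\to 0$ and the image of $P_t f(L) P_t$ in the Calkin algebra is bounded away from $0$ — here one must be careful, because the Calkin norm of $P_t f(L) P_t$ could \emph{a priori} shrink to $0$ along the path. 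This is precisely where $L$-non-triviality must be upgraded to a quantitative statement: using that $\pi(P_t)$ commutes with $\pi(f(L))$, one gets $\pi(P_t f(L) P_t) = \pi(P_t)\pi(|f|(L))^{1/2}\cdot(\dots)$, and the relevant norm is computed from the joint spectrum of the commuting pair $(\pi(P_t),\pi(L))$ in the commutative $C^\ast$-algebra they generate; $L$-non-triviality forces this joint spectrum to contain points with $P$-coordinate $1$ above \emph{every} point of $\bS^1$, so $\|\pi(P_t f(L) P_t)\| = \|f\|_\infty$ exactly (and similarly for $P_t^\perp$). That rigidity — the Calkin norm takes the extreme value $\|f\|_\infty$ rather than something in between — is the crux, and it makes both openness and closedness of $T$ transparent: norm-continuity of $t\mapsto P_t$ forces $\|\pi(P_t f(L) P_t)\|$ to be continuous in $t$, and a value that is always either $\|f\|_\infty$ or strictly less, and equals $\|f\|_\infty$ at $t=0$, must equal $\|f\|_\infty$ throughout — after which one checks the norm being $\|f\|_\infty$ for all $f$ re-implies non-triviality.

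The step I expect to be the main obstacle is establishing the rigidity claim that $L$-non-triviality is equivalent to the all-or-nothing statement $\|\pi(P f(L) P)\| \in \{0,\|f\|_\infty\}$, i.e. that there is no intermediate regime. The clean way to see it: since $\pi(P)$ and $\pi(L)$ commute and $\sigma(\pi(L))=\bS^1$, the commutative $C^\ast$-algebra they generate is $C(\Omega)$ for a closed $\Omega\subseteq\{0,1\}\times\bS^1$ whose projection to $\bS^1$ is onto (by $\sigma_{\mathrm{ess}}(L)=\bS^1$); $P f(L)P$ being non-compact for \emph{all} non-zero continuous $f$ means $\Omega\cap(\{1\}\times\supp f)\neq\varnothing$ for all such $f$, i.e. $\Omega\supseteq\{1\}\times\bS^1$, and symmetrically $\Omega\supseteq\{0\}\times\bS^1$, so $\Omega=\{0,1\}\times\bS^1$ and $\|\pi(Pf(L)P)\|=\sup_{z}|f(z)|=\|f\|_\infty$. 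Once this structural fact is in hand, the closed-and-open argument above closes out the lemma; one should also note that it suffices to test a countable dense family of $f$'s (e.g. trigonometric polynomials) to make the "for all $f$" manipulations uniform in $t$, though this is a routine reduction. I would also cross-check this against the $K$-theoretic reformulation via ampleness (\cite[Def.~5.1.3]{higson2000analytic}) cited in the paper, since "$P$ ample" should be exactly the condition $\Omega\supseteq\{1\}\times\bS^1$ and ampleness is manifestly a homotopy-stable property.
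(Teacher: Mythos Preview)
Your approach via the Calkin algebra and the joint spectrum of the commuting pair $(\pi(P),\pi(L))$ is genuinely different from the paper's, and the rigidity claim --- that $L$-non-triviality of $P$ is equivalent to $\|\pi(Pf(L)P)\|=\|f\|_\infty$ and $\|\pi(P^\perp f(L)P^\perp)\|=\|f\|_\infty$ for every $f\in C(\bS^1)$ --- is correct and gives a pleasant structural characterization. Your closedness argument for $T$ is fine once rigidity is in hand. The openness argument, however, has a real gap: for a \emph{fixed} $f$, the dichotomy ``$\|\pi(P_tf(L)P_t)\|$ is either $\|f\|_\infty$ or strictly less'' is vacuous (the value is always $\leq\|f\|_\infty$), and continuity in $t$ alone cannot force constancy. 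Even when $P_t$ fails to be non-trivial, the failure may be witnessed by some \emph{other} $g$ while $\|\pi(P_tf(L)P_t)\|$ still equals $\|f\|_\infty$ for your chosen $f$, so there is no per-$f$ discrete jump to exploit. The fix is to argue uniformly in $f$: if $P_{t_0}$ is non-trivial then for every $f$ with $\|f\|_\infty=1$ one has
\[
\|\pi(P_tf(L)P_t)\| \;\geq\; \|\pi(P_{t_0}f(L)P_{t_0})\| - 2\|P_t-P_{t_0}\| \;=\; 1 - 2\|P_t-P_{t_0}\|,
\]
so once $\|P_t-P_{t_0}\|<1/2$ the operator $P_tf(L)P_t$ is non-compact for \emph{all} nonzero $f$ simultaneously (and likewise for $P_t^\perp$). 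Equivalently, $\Phi(t):=\inf_{\|f\|_\infty=1}\|\pi(P_tf(L)P_t)\|$ is continuous and, by your joint-spectrum description, takes only the values $0$ and $1$, hence is constant. Your final remark that ampleness is ``manifestly homotopy-stable'' is essentially the lemma itself and does not furnish an independent check.

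By contrast, the paper's proof bypasses the clopen machinery entirely. It uses the standard fact that homotopic projections in a unital $C^\ast$-algebra are unitarily equivalent within that algebra, so $P=U^\ast QU$ for some $U\in\calU(\calL_L)$, and then checks directly that $L$-non-triviality is invariant under conjugation by $L$-local unitaries: if $Pf(L)P\in\calK$ then
\[
Qf(L)Q \;=\; UPU^\ast f(L)UPU^\ast \;=\; UPf(L)PU^\ast + UPU^\ast[f(L),U]PU^\ast \in \calK
\]
since $[U,f(L)]\in\calK$, contradicting $Q\in\calP^\NT_L$ unless $f=0$. Your route, once patched, yields more structural information (the joint spectrum is exactly $\{0,1\}\times\bS^1$), but the paper's three-line argument is the cheaper path to the lemma.
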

Now we are finally able to finish the
\begin{proof}[Proof of \cref{thm:classification of R-non-trivial orthogonal projections}]
Suppose $P,Q\in\calP^\NT_L$ have the same index $\calN_L(P)=\calN_L(Q)$. Using \cref{lem:unitary equivalence of L-non-trivial projections}, we have $P\sim_u Q$ in $\calP(\calL_L)$, i.e., there exists $U\in\calU(\calL_L)$ such that $P=U^\ast QU$. From \cref{thm:R-local unitaries are path-connected}, we know that $U\sim_h \Id$ in $\calU(\calL_L)$, and this provides the homotopy $P\sim_h Q$ in $\calP(\calL_L)$. The rest of the claim is immediate from \cref{lem:projection pi0 surjective} (surjectivity) and \cref{lem:L-non-trivial are path components} (injectivity). Indeed, any point in the path bceomes immediately $L$-non-trivial via \cref{lem:L-non-trivial are path components} and so the whole path.
\end{proof}

\section{Details of the proofs}\label{sec:details of the proof}

\subsection{Local unitaries}

\begin{proof}[Proof of \cref{lem:class of infinite projections}]

Fix $n\in\NN$. Let $\{y_i\}_{i=1}^\infty$ be an enumeration of lattice points specified by $\Lambda_S\oplus \Id_n$, i.e., the set $S$ together with $n$-copies of $\ZZ^2$. Define
\eq{
	I_k=[\arg(y_k)-1/2^k,\arg(y_k)+1/2^k]\,.
} 
Iteratively, for each $k\in \NN$, pick $x_k\in S$ such that $x_k$ minimizes
\eq{
    \Set{ \norm{x} | x\in S\setminus \{x_1,\dots,x_{k-1}\},~ \arg(x)\in I_k}\,.
}
This is possible since $I_k$ intersects $\Set{\arg(x)|x\in S}$ for infinitely many $x$. Consider the mapping $S\cup (\ZZ^2)^n \ni y_k\mapsto x_k \in S$. The map is injective since in each step we exclude previously chosen points. The map is surjective since we choose $x\in S$ that minimizes ts length. Let $V$ maps $\delta_{y_k}$ to $\delta_{x_k}$. Then $V^\ast V=\Lambda_S\oplus\Id_n$ and $VV^\ast = \Lambda_S$.
We now show that $V$ is local. Write 
\eq{
	V=\begin{bmatrix}V_0 & V_1 & \dots & V_n\end{bmatrix}\in M_{1,n+1}(\calB(\ell^2(\ZZ^2)))\,.
}
Let $l\in\{0,1,\dots,n\}$ and consider $V_l$. Let $I$ and $J$ be two disjoint closed intervals of $\bS^1_\QQ$. Let $\{y_{n_k}\}_{k=1}^\infty$ be the subsequence of $\{y_i\}_{i=1}^\infty$ that enumerate points in $\chi_I$ in the $l$-th stack (for $l=0$, the $l$-stack is $S$, and for $l\geq 1$, it is $\ZZ^2$). We argue that
\eq{
	|\Set{z_{n_k}\in J | k\in\NN}| < \infty\,.
}
Indeed, points between $\chi_I$ and $\chi_J$ must have angle larger than some fixed positive constant. However, the $z_k$ points are chosen such that $\arg(z_k)\in I_k$. In particular, we have $z_{n_k}\in I_{n_k}$ and hence $z_{n_k}$ must be arbitrarily close to points in $I$ in angles. Therefore, the operator $\chi_J V_l\chi_I$ is finite-rank.
    
\end{proof}

\begin{proof}[Proof of \cref{lem:localized centers}]
Let $A$ be $L$-local and $\ve>0$. Let $\{\theta_k\}_{k\in\NN}$ be a sequence of radians in $\bS^1_\QQ$. Let $\ve_k = \ve/2^{2k-1}$ for $k\in\NN$. Let $\{J_k\}_{k\in\NN}$ be a sequence of closed intervals of $\bS^1_\QQ$ such that the endpoints of $J_k$ enumerate all $\bS^1_\QQ$. For each $k\in \NN$, by \cref{cor:contained in cone}, there exists disjoint subsets $E^b_k$ and $E^g_k$ that partition $C_{J_k^c}\subset \ZZ^2$ such that $\|\Lambda_{E^b_k} A \Lambda_{J_k}\|\leq \ve_{2k-1}$, and $|E^g_k\cap C_I|<\infty$ for any closed interval $I$ of $\bS^1_\QQ$ that is disjoint from $J_k$. Define
\eql{\label{eq:cone elements}
    P_{2k-1} =\Lambda_{E_k^b},\ Q_{2k-1} = \Lambda_{E_{J_k}}\,.
}
Using \cref{cor:contained in annulus}, there exists a sequence $\{x_k\}_{k\in\NN}$ of points on $\ZZ^2$ such that $\arg(x_k)=\theta_k$ for each $k\in\NN$, and a sequence $\{r_k\}_{k\in\NN}$ of radii $0=:r_0<r_1<r_2<\dots$, such that $x_k\in B_{r_k}\setminus B_{r_{k-1}}$ and $\|\Lambda_{B_{r_k}^c\cup B_{r_{k-1}}}U (\delta_{x_k}\otimes \delta_{x_k}^\ast)\|\leq \ve_{2k}$ for all $k\in\NN$. Define
\eql{\label{eq:annulus elements}
    P_{2k}=\Lambda_{B_{r_k}^c\cup B_{r_{k-1}}},\ Q_{2k}=\delta_{x_k}\otimes \delta_{x_k}^\ast \,.
}
Then by \cref{lem:turn off elements}, there exists $B\in \calL_L$ such that 
\eql{\label{eq:turn off all elements}
    P_kB Q_k=0
}
for all $k\in\NN$ and $\|A-B\|\leq\ve$. 

Consider $\{Y_k\}_{k\in\NN}$ defined in \cref{eq:interaction range of B}. We argue that $Y_k\cap Y_l=\varnothing$ for all $k,l\in\NN$ such that $k\neq l$.
Indeed, if $y\in Y_k$, then $y=x_k$ or $y\in \supp(B\delta_{x_k})$. In the latter case, we have
\eq{
    \ip{\delta_y}{B\delta_{x_k}} = \ip{\delta_y}{(P_{2k}+P_{2k}^\perp) B (Q_{2k}+Q_{2k}^\perp)\delta_{x_k}} 
    =\ip{\delta_y}{P_{2k}^\perp B Q_{2k} \delta_{x_k}} 
    =\ip{P_{2k}^\perp \delta_y}{B\delta_{x_k}}
}
where we used \cref{eq:turn off all elements} and \cref{eq:annulus elements} in the second equality. Since $P_{2k}^\perp= \Lambda_{B_{r_k}\setminus B_{r_{k-1}}}$, if follows that $y$ must be in $B_{r_k}\setminus B_{r_{k-1}}$ for $\ip{\delta_y}{B\delta_{x_k}}$ to be nonzero, and hence $\supp(B\delta_{x_k})\subset B_{r_k}\setminus B_{r_{k-1}}$. Thus $Y_k\subset B_{r_k}\setminus B_{r_{k-1}}$ and they are pairwise disjoint since $B_{r_k}\setminus B_{r_{k-1}}$ are. Furthermore, since $|B_{r_k}\setminus B_{r_{k-1}}|<\infty$, it follows that $|Y_k|<\infty$.

We now show that for any two closed disjoint intervals of $I,J$ of $\bS^1_\QQ$, we have
\eql{\textstyle
	\left|\left(\bigcup_{\substack{k\in\NN \\ \arg(x_k)\in J}} Y_k \right) \cap C_I \right| <\infty \,.
    \label{eq:finite one directional interaction}
}
Indeed, the set $\bigcup_{\substack{k\in\NN \\ \arg(x_k)\in J}} Y_k$ is the union of $\bigcup_{\substack{k\in\NN \\ \arg(x_k)\in J}} \supp(B\delta_{x_k})$ and $\bigcup_{\substack{k\in\NN \\ \arg(x_k)\in J}} \{x_k\}$, and since the latter one is clearly disjoint from $C_I$, it suffices to show that the former set intersects $C_I$ for finitely many points. To that end, we have $J_l=J$ for some $l\in\NN$. We argue that $\bigcup_{\substack{k\in\NN \\ \arg(x_k)\in J}} \supp(B\delta_{x_k})\subset E^g_l\cup C_J$. 
Indeed, if $y\in \bigcup_{\substack{k\in\NN \\ \arg(x_k)\in J}} \supp(B\delta_{x_k})$, then $\ip{\delta_y}{B\delta_x}\neq 0$ for some $x\in\ZZ^2$ with $\arg(x)\in J$. Now
\eq{
    \ip{\delta_y}{B\delta_x} = \ip{\delta_y}{(\Lambda_{E^g_l}+\Lambda_{E^b_l}+\Lambda_J)B(\Lambda_{J^c}+\Lambda_J)\delta_x} 
    = \ip{\delta_y}{(\Lambda_{E^g_l}+\Lambda_J)B \Lambda_J\delta_x} 
    = \ip{(\Lambda_{E^g_l}+\Lambda_J)\delta_y}{B\delta_x}
}
where we used \cref{eq:turn off all elements} and \cref{eq:cone elements} in the second equality. Thus, we must have $y\in E_l^g\cup C_J$. Now, we have $|(E_l^g\cup C_J)\cap C_I| = |E^l_g\cap C_I|<\infty$, and conclude \cref{eq:finite one directional interaction}.

Finally, we show that for any two closed disjoint intervals $I,J$ of $\bS^1_\QQ$, we have \cref{eq:finite mixing between two}. Since each $|Y_k|$ is finite, it suffices to show that the set of indices $k\in\NN$ such that $Y_k\cap C_I\neq \varnothing$ and $Y_k\cap C_J\neq \varepsilon$ is finite. Suppose, by contradiction, there are infinitely many such indices, denoted $\{k_j\}_{j\in\NN}$. Note that the disjoint closed intervals $I$ and $J$ partition $\bS^1$ into four region (intersecting at endpoints), denoted as $I,J,K,\ti{K}$ where $K,\ti K$ are closed intervals of $\bS^1_\QQ$. Since there are infinitely many points $\{x_{k_j}\}_{j\in\NN}$ and only four regions $C_I,C_J,C_K,C_{\ti K}$ partitioning the $\ZZ^2$ plane, one of the region must contain infinitely many $x_{k_j}$ points. Suppose $x_{k_j}\in C_I$ for all $j\in\NN$ (after relabeling), then $|\bigcup_j Y_{k_j} \cap C_J|=\infty$. However, this contradicts \cref{eq:finite one directional interaction}. Indeed, we have
\eq{\textstyle
    |\bigcup_j Y_{k_j} \cap C_J|\leq \left|\left(\bigcup_{\substack{k\in\NN \\ \arg(x_k)\in I}} Y_k \right) \cap C_J \right| <\infty\,.
}
Similarly if $x_{k_j}\in C_J$ for all $j\in\NN$, we will reach a contradiction. Suppose $x_{k_j}\in C_K$ for all $j\in\NN$. We split the closed interval $K$ into two closed intervals $K_1,K_2$ of $\bS^1_\QQ$. Suppose, without loss of generality, that $x_{k_j}\in C_{K_1}$ for all $j\in\NN$ and $K_1$ is the part that is closer to $I$. Then we have $|\bigcup_j Y_{k_j} \cap C_J|=\infty$. However, this contradicts \cref{eq:finite one directional interaction} using the disjoint intervals $K_1$ and $J$. This concludes the proof of \cref{eq:finite mixing between two}.
\end{proof}

\begin{proof}[Proof of \cref{cor:V operator that revert localized centers}]

Define the unitary operator $V$ as follows: $V$ maps $B \delta_{x_k}$ to $\delta_{x_k}$; extend $B \delta_{x_k}$ to an orthonormal basis in $\im \Lambda_{Y_k}$; map those basis elements (except $B \delta_{x_k}$) to the position basis elements in $\im \Lambda_{Y_k}$ (except $\delta_{x_k}$); define $V$ to be identity on $ (\bigoplus_{k\in\NN} \im \Lambda_{Y_k})^\perp $. 
The operator $V$ is well-defined since each $|Y_k|$ is finite and $\{Y_k\}_{k\in\NN}$ consists of mutually disjoint subsets as shown in \cref{lem:localized centers}.

We argue that $V$ is $L$-local. To that end, we show that for any $I,J$ disjoint closed intervals of $\bS^1_\QQ$, we have that $\Lambda_J V \Lambda_I$ is finite-rank. We can decompose $\ZZ^2$ into four disjoint subsets
\eq{\textstyle
    C_{I^c},\ C_I\cap (\ZZ^2 \setminus \bigcup_{k\in\NN} Y_k),\ C_I\cap Z^g,\ C_I\cap Z^b
}
where
\eq{\textstyle
Z^g &= \bigcup\Set{Y_k|k\in\NN, Y_k\cap C_I\neq \varnothing,\ Y_k\cap C_J\neq \varnothing} \\
Z^b &= \bigcup \Set{Y_k|k\in\NN, (Y_k\cap C_I= \varnothing ~\mathrm{or}~ Y_k\cap C_J= \varnothing)}\,.
}
Clearly $\Lambda_JV\Lambda_I\Lambda_{I^c}=0$. Since $V$ is identity on $(\bigoplus_{k\in\NN} \im \Lambda_{Y_k})^\perp $, it follows that \eq{
\Lambda_JV\Lambda_I\Lambda_{C_I\cap (\ZZ^2 \setminus \bigcup_{k\in\NN} Y_k)}=\Lambda_J\Lambda_{C_I\cap (\ZZ^2 \setminus \bigcup_{k\in\NN} Y_k)}=0\,.
}
Suppose $y\in C_I\cap Z^b$, then either $y\in C_I$ and $y\in Y_k$ for some $k\in\NN$ such that $Y_k\cap C_I=\varnothing$, or $y\in C_I$ and $y\in Y_k$ for some $k\in\NN$ such that $Y_k\cap C_J=\varnothing$. The former case is vacuous. In the latter case, we have $\Lambda_JV\Lambda_I\delta_y=\Lambda_JV\delta_y$; and since $V\delta_y\in \im \Lambda_{Y_k}$ and $Y_k\cap C_J=\varnothing$, it follows that $\Lambda_JV\delta_y=0$. Using \cref{eq:finite mixing between two}, the set $C_I\cap Z^g$ is finite and hence $\Lambda_JV\Lambda_I$ is finite-rank.

The argument in the previous paragraph works for all unitary operators that are reduced by projections $\Lambda_{Y_k}$ for all $k\in \NN$, and are identity on $(\bigoplus_{k\in\NN} \im \Lambda_{Y_k})^\perp $. Therefore, we can construct $V\sim_h \Id$ by deforming each unitary matrices restricted on each $\im \Lambda_{Y_k}$.

It is immediate by construction that $VB$ take the form \cref{eq:VB block form}.
\end{proof}

The following proof is inspired by \cite[Exercise 8.11]{rordam2000introduction}.
\begin{proof}[Proof of \cref{prop:block unitary homotopy}]\label{pf:prop:block unitary homotopy}

By \cref{cor:consequence of trivial K_1 group for L-local algebra}, there exists some $n\in\NN$ such that  $U\oplus \Id_n\sim_h \Id_{n+1}$. Let $W_t\in\calU_{n+1}(\calL_L)$ be such homotopy, with $W_0=\Id_{n+1}$ and $W_0=U\oplus \Id_n$. 
Now invoking \cref{eq:infintie projections} on our $P$ (by hypothesis) with that same $n$, we have $P\oplus\Id_n\sim_0 P$, i.e., there exists a partial isometry $T\in M_{1,n+1}(\calL_L)$ such that $|T|^2\equiv T^\ast T=P\oplus \Id_n$ and $|T^\ast|^2\equiv T T^\ast= P$. Using the identity $T=TT^\ast T$, we have
\eq{
    P^\perp T=P^\perp (TT^\ast) T = 0
}
and 
\eq{
T \begin{bmatrix}P^\perp \\ 0_{n,1}\end{bmatrix} = T (T^\ast T) \begin{bmatrix}P^\perp \\0_{n,1}\end{bmatrix} = 0
}
Now define
\eq{
	V := \begin{bmatrix}P^\perp & 0_{1,n}\end{bmatrix} + T \in M_{1,n+1}(A)\,.
}
We claim $V$ is an isometry. Indeed,
\eq{
	V^\ast V = (\begin{bmatrix}P^\perp \\ 0_{n,1}\end{bmatrix} + T^\ast)(\begin{bmatrix}P^\perp & 0_{1,n}\end{bmatrix} + T) = P^\perp\oplus 0_n + T^\ast T = \Id\,.
}
On the other hand
\eq{
	VV^\ast = (\begin{bmatrix}P^\perp & 0_{1,n}\end{bmatrix} + T)(\begin{bmatrix}P^\perp \\ 0_{n,1}\end{bmatrix} + T^\ast) = P^\perp +TT^\ast=\Id\,.
}
Consider the continuous path
\eq{
	Z_t = VW_tV^\ast + (\Id-VV^\ast)\,.
}
Using $V^\ast (\Id-VV^\ast)=0$ and $V^\ast V=\Id$, it follows that $Z_t\in\calU(\calL_L)$. Indeed, we have
\eq{
    Z_t^\ast Z_t =  (VW_t^\ast V^\ast + (\Id-VV^\ast))  (VW_tV^\ast + (\Id-VV^\ast)) =VV^\ast + (\Id-VV^\ast)^2 = \Id
}
and similarly
\eq{
    Z_tZ_t^\ast = (VW_tV^\ast + (\Id-VV^\ast)) (VW_t^\ast V^\ast + (\Id-VV^\ast)) = VV^\ast + (\Id-VV^\ast)^2=\Id.
}
It is clear that $Z_0=\Id$. We now show that $Z_1=U$. Indeed, we need to compute $V (U \oplus \Id_n)V^\ast + (\Id-VV^\ast)$. Consider
\eq{
	V (U\oplus \Id_n) &= (\begin{bmatrix}P^\perp & 0_{1,n}\end{bmatrix} + TT^\ast T) (U\oplus \Id_n) \\
    &=\begin{bmatrix}P^\perp U & 0_{1,n}\end{bmatrix} + T(P\oplus \Id_n)(U\oplus \Id_n) \\
    &= \begin{bmatrix}P^\perp U P^\perp &0_{1,n}\end{bmatrix} + T 
}
where in the last equality, we used $PU=P$ via \cref{eq:U acts as identity within an infinite projection}. Proceeding, we have
\eq{
	(\begin{bmatrix}P^\perp U P^\perp &0_{1,n}\end{bmatrix} + T)  (\begin{bmatrix}P^\perp \\ 0_{n,1}\end{bmatrix} + T^\ast) = P^\perp U P^\perp + TT^\ast
}
and finally, we obtain
\eq{
V (U \oplus \Id_n)V^\ast + (\Id-VV^\ast) = P^\perp U P^\perp + TT^\ast + (\Id-VV^\ast) = P^\perp U P^\perp + TT^\ast +P -TT^\ast \stackrel{\cref{eq:U acts as identity within an infinite projection}}{=} U.
}
\end{proof}

\subsection{Local projections}

\begin{proof}[Proof of \cref{lem:projection pi0 surjective}]
Consider the Hilbert space $\ell^2(\ZZ)$ and let $R$ be the bilateral right shift, i.e., $R\delta_x = \delta_{x+1}$ for all $x\in\ZZ$. It is clear that $\Lambda R\Lambda + \Lambda^\perp$ is Fredholm of index $-1$, where $\Lambda$ here is $\Lambda:=\sum_{x\geq 1}\delta_x\otimes \delta_x^\ast$.

Let $f\in C(\bS^1)$. We argue that if $\Lambda f(R)\Lambda\in\calK$, then $f=0$. To that end, we first observe that 
\eq{
\|f(R)\delta_x\|=\|f(R)\delta_y\|
}
for all $x,y\in\ZZ$. Indeed, let $p_n$ be a sequence of polynomials in two variables that converge uniformly to $f$. Then 
\eq{
\|f(R)\delta_x\|=\lim_n\|p_n(R,R^\ast)\delta_x\|=\lim_n\|p_n(R,R^\ast)\delta_y\|=\|f(R)\delta_y\|\,.
}
Therefore, there are two cases: either $\|f(R)\delta_x\|=0$ for all $x\in\ZZ$ or $\|f(R)\delta_x\|=C>0$ for all $x\in\ZZ$.
In the former case, we have $f(R)=0$ and hence $f=0$ and we are done. Suppose it is the latter case. For $x$ large enough, we have
\eq{
    \|\Lambda f(R)\Lambda \delta_x\| = \|\Lambda f(R)\delta_x\| \geq \|f(R)\delta_x\| - \|[\Lambda,f(R)]\delta_x\| \,.
}
By assumption that $\Lambda f(R)\Lambda$ is compact, it follows that $\|\Lambda f(R)\Lambda \delta_x\|\to 0$ as $x\to\infty$. However, the commutator $[\Lambda,f(R)]$ is also compact and we have $\|[\Lambda,f(R)]\delta_x\|\to 0$ as $x\to\infty$. This leads to a contractdiction since $\|f(R)\delta_x\|=C>0$ for all $x\in\ZZ$.

Using the Weyl-von Neumann-Berg theorem \cite[Theorem 39.8]{conway2000course}, there exists a unitary operator $U:\ell^2(\ZZ^2)\to\ell^2(\ZZ)$ such that
\eq{
    L=U^\ast RU + K
}
for some $K\in\calK(\ell^2(\ZZ^2))$. Since $\Lambda$ is $R$-local, it follows that $P:=U^\ast \Lambda U$ is $L$-local, and that $\findex(PLP+P^\perp)=-1$, see also \cite[Proposition 2.1]{chung2024topological}. 
We show that $P$ is $L$-non-trivial. Indeed, let $f\in C(\bS^1)$ and suppose $Pf(L)P\in\calK$. Then
\eq{
Pf(L)P=U^\ast \Lambda U f(L) U^\ast \Lambda U = U^\ast \Lambda  f(ULU^\ast) U^\ast \Lambda U\,.
}
Now $f(ULU^\ast)-f(R)$ is compact since $ULU^\ast-R=\ti K$ is. Indeed, with $\pi:\calB\to\calQ$ being the quotient map to the Calkin algebra, we have 
\eq{
    \pi(f(ULU^\ast))=\pi(f(R+\ti K)) = f(\pi(R+\ti K)) = f(\pi(R)) = \pi (f(R))
}
where we used \cite[Proposition 4.4.7]{kadison_fundamentals_1997} to swap $\pi$ and $f$. It follows that $\Lambda f(R)\Lambda$ is compact and hence $f=0$. In an analogous way, if $f\in C(\bS^1)$ and $P^\perp f(L) P^\perp\in\calK$, then $\Lambda^\perp f(R)\Lambda^\perp\in\calK$ and hence $f=0$. This concludes the argument that $P$ is $L$-non-trivial.

In summary, we have obtained a $L$-non-trivial projection $P$ that has $\calN_L(P)=-1$. To obtain $P\in\calP^\NT_L$ that has $\calN_L(P)=k$ for some other $k\in\ZZ$, we can repeat the argument by considering the shift $R^{-k}$ operator on $\ell^2(\ZZ)$.
\end{proof}

\begin{proof}[Proof of \cref{lem:L-non-trivial are path components}]
Let $P\in\calP(\calL_L)$ and suppose we have $P\sim_h Q$ for some $Q\in\calP^\NT_L$. By \cite[Proposition 2.2.7]{rordam2000introduction}, we know that $P\sim_h Q$ implies $P\sim_u Q$, and hence there is $U\in\calU(\calL_L)$ such that $P=U^\ast QU$. Let $f\in C(\bS^1)$. Suppose $Pf(L)P\in\calK$. We have
\eq{
    Qf(L)Q = UPU^\ast f(L) UPU^\ast =  UPU^\ast [f(L) ,U]PU^\ast + UPf(L)PU^\ast \in\calK \,.
}
By $L$-non-triviality of $Q$, we have that $f=0$. Using $P^\perp\sim_uQ^\perp$, we can show similarly that if $P^\perp f(L)P^\perp\in\calK$, then $f=0$.
\end{proof}

	\bigskip
	\bigskip
	\noindent\textbf{Acknowledgments.} 
	We are  indebted to Christopher Broune, Gian Michele Graf, Shanshan Hua and Jonathan Rosenberg for stimulating discussions.
	\bigskip

\appendix
\section{Locality and boundedness}\label{sec:locality and boundedness}
We recount an equivalent way to formualte $L$-locality, which was discussed in \cite{chung2024topological}.
\begin{thm}[Theorem 2.5 of \cite{chung2024topological}]\label{eq:cone characterization of L locality}
An operator $A$ is $L$-local iff for any two disjoint closed interval $I,J\subset \bS^1_\QQ$, we have
\eql{
    \Lambda_I A \Lambda_J \in\calK \,.
}
\end{thm}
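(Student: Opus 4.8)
The plan is to prove both implications by relating the arc projections $\Lambda_I$ to continuous functions of $L$. The key observation is that for a closed arc $I=[\ee^{\ii\alpha},\ee^{\ii\beta}]\subset\bS^1$ one can write $\Lambda_{C_I}$ as a "spectral truncation" of $L$ in the position basis: since $L=\exp(\ii\arg(X_1+\ii X_2))$ acts diagonally with eigenvalue $\ee^{\ii\arg(x)}$ on $\delta_x$, the projection $\Lambda_{C_I}$ is exactly the spectral projection $\chi_I(L)$ (restricted to the complement of the origin, which contributes only a rank-one correction). The subtlety is that $\chi_I$ is not continuous, so we cannot directly conclude $\chi_I(L)$ is $L$-local from $[A,L]$ compact; instead we approximate.

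\textbf{Step 1 (easy direction: $L$-local $\Rightarrow$ cone condition).} Suppose $[A,L]\in\calK$. Given disjoint closed arcs $I,J\subset\bS^1_\QQ$, pick a continuous $f:\bS^1\to[0,1]$ with $f\equiv 1$ on a neighborhood of $I$ and $f\equiv 0$ on a neighborhood of $J$; such $f$ exists since $I,J$ are disjoint and closed. Then $\Lambda_I f(L)=\Lambda_I$ up to a finite-rank error supported at the origin, and likewise $f(L)\Lambda_J=0$ up to finite rank (here I use that $C_I$ and $\supp$ of the relevant nearby cone differ only in finitely many lattice points once one passes to the support of $f$ being a strict neighborhood — more carefully, $\Lambda_I = \Lambda_I f(L) + F_1$ and $(1-f)(L)\Lambda_J = \Lambda_J + F_2$ with $F_1,F_2$ finite rank). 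Since $[A,L]\in\calK$ implies $[A,g(L)]\in\calK$ for every $g\in C(\bS^1)$ (polynomial approximation in $L,L^\ast$), we get
\eq{
\Lambda_I A \Lambda_J = \Lambda_I f(L) A \Lambda_J + F_1 A\Lambda_J = \Lambda_I \big(A f(L) + [f(L),A]\big)\Lambda_J + F_1 A \Lambda_J\,.
}
The middle term has $f(L)\Lambda_J$ finite rank, the commutator is compact, and $F_1 A\Lambda_J$ is finite rank; hence $\Lambda_I A\Lambda_J\in\calK$.

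\textbf{Step 2 (converse: cone condition $\Rightarrow$ $L$-local).} Assume $\Lambda_I A\Lambda_J\in\calK$ for all disjoint closed $I,J\subset\bS^1_\QQ$. We must show $[A,L]\in\calK$. Cover $\bS^1$ by finitely many short closed arcs $\{I_m\}_{m=1}^N$ with endpoints in $\bS^1_\QQ$ (possible since $\bS^1_\QQ$ is dense) such that each $I_m$ meets only its two neighbors, and let $\{\chi_m\}$ be the corresponding arc projections $\Lambda_{I_m}$, which (up to the finite-rank origin correction) form a partition of unity summing to $\Id$. Write $A=\sum_{m,m'}\chi_m A\chi_{m'}$. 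For $|m-m'|\geq 2$ the arcs $I_m, I_{m'}$ are disjoint closed arcs with rational endpoints, so $\chi_m A\chi_{m'}\in\calK$ by hypothesis; and $[\chi_m A\chi_{m'}, L]$ is then compact. For the remaining "near-diagonal" terms $\chi_m A \chi_{m'}$ with $|m-m'|\leq 1$, the operator $L$ restricted to $\im\chi_m + \im\chi_{m'}$ is, up to a rotation, close to the identity: $\|\chi_m(L-\ee^{\ii\theta_m})\chi_m\|\leq \diam(I_m)$, so $[L,\chi_m A\chi_{m'}]$ has norm controlled by the mesh of the cover. This does not make it compact on the nose, so instead I refine: take a sequence of covers with mesh $\to 0$; the near-diagonal part of $[A,L]$ then has norm $\to 0$ while the far-diagonal part is compact, exhibiting $[A,L]$ as a norm-limit of compacts, hence compact.

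\textbf{Main obstacle.} The delicate point is Step 2: a fixed finite cover only gives $[A,L]=(\text{compact})+(\text{small})$, not $[A,L]$ compact, and one cannot take $N\to\infty$ naively because the "compact" piece is a sum of $O(N^2)$ compact operators whose norms need not stay bounded. The fix is to be quantitative: for a cover of mesh $\delta$, the near-diagonal commutator terms satisfy $\|\sum_{|m-m'|\leq 1}[L,\chi_m A\chi_{m'}]\|\lesssim \delta\,\|A\|$ (using that consecutive $\chi_m$'s are mutually orthogonal, so at each "site" only $O(1)$ terms overlap and the triangle inequality does not blow up), while $K_\delta:=\sum_{|m-m'|\geq 2}[L,\chi_m A\chi_{m'}]=[L,A]-(\text{near-diagonal part})$ is compact for \emph{each fixed} $\delta$. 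Then $[L,A]$ is within $O(\delta)$ of the compact operator $K_\delta$ for every $\delta>0$, so $[L,A]\in\calK$. This is essentially the argument already used inside the proof of \cref{lem:localized centers} (the finite-mixing estimates), and I expect it to go through with the same circle-geometry bookkeeping; the only real work is making the "$O(1)$ overlap" claim precise so that the triangle inequality over the near-diagonal terms stays uniformly bounded.
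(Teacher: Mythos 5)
The paper does not prove this theorem: it is imported verbatim as Theorem 2.5 of \cite{chung2024topological}, and the text following it only records that Gilfeather's commutant technique gives a second, independent proof. So there is no ``paper's own proof'' to compare against; I can only evaluate your argument on its own merits.

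Your two-step strategy is sound and, with one repair, does give a correct proof. Step 1 is fine as written: for disjoint closed arcs one takes a Urysohn-type $f\in C(\bS^1)$ with $f\equiv 1$ near $I$ and $f\equiv 0$ near $J$; since $L$ is diagonal in the position basis with eigenvalue $\ee^{\ii\arg(x)}$ on $\delta_x$, one has $\Lambda_I f(L)=\Lambda_I$ and $f(L)\Lambda_J=0$ up to a rank-one defect at the origin, and $[A,f(L)]\in\calK$ follows from $[A,L]\in\calK$ by polynomial approximation and norm-closedness of $\calK$. Step 2 is the right idea (cover by short arcs, split the commutator into a compact far-diagonal part and a norm-small near-diagonal part, let the mesh tend to zero), and the quantitative point you flag as the ``main obstacle'' is handled correctly: since $L$ and $\chi_m$ are simultaneously diagonal, $\chi_m[A,L]\chi_{m'}=\chi_m A\chi_{m'}(L-\ee^{\ii\theta})-(L-\ee^{\ii\theta})\chi_m A\chi_{m'}$ for any $\ee^{\ii\theta}\in I_m\cap I_{m'}$, whose norm is $\lesssim\|A\|\,\delta$, and the block-tridiagonal sum of such terms over orthogonal blocks has norm bounded by three times the supremum of the individual norms, not by $N$.

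The one genuine imprecision is in the phrase ``$\{\chi_m\}$ form a partition of unity up to a finite-rank origin correction.'' That is false if the $I_m$ are closed arcs sharing endpoints in $\bS^1_\QQ$: two adjacent cones $C_{I_m}$, $C_{I_{m+1}}$ then share an entire rational ray, which contains \emph{infinitely} many lattice points, so $\chi_m\chi_{m+1}$ is an infinite-rank projection rather than zero, $\sum_m\chi_m\neq\Id$, and the orthogonality underpinning your block-tridiagonal norm bound fails. The repair is routine but must be stated: use \emph{half-open} arcs $I_m=[\ee^{\ii\alpha_m},\ee^{\ii\alpha_{m+1}})$ with $\alpha_m\in\bS^1_\QQ$ so that the $\chi_m:=\Lambda_{C_{I_m}}$ are genuinely pairwise orthogonal and $\sum_m\chi_m=\Id-\Lambda_{\{0\}}$ (honestly rank one); then for $2\leq|m-m'|\leq N-2$ (cyclically) the \emph{closures} $\overline{I_m},\overline{I_{m'}}$ are disjoint closed arcs with endpoints in $\bS^1_\QQ$, and since $\chi_m\leq\Lambda_{\overline{I_m}}$ one gets $\chi_m A\chi_{m'}=\chi_m\Lambda_{\overline{I_m}}A\Lambda_{\overline{I_{m'}}}\chi_{m'}\in\calK$ from the hypothesis. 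With that modification the far-diagonal part is compact, the near-diagonal part has norm $O(\delta\|A\|)$ by the block-tridiagonal estimate, and $[A,L]$ is a norm-limit of compacts, as you claim.
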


We note the technique in \cite[Theorem 2.1]{gilfeather1983commutants} provides another proof of \cref{eq:cone characterization of L locality}, which is different from the proof given in \cite[Theorem 2.5]{chung2024topological}.

\begin{lem}\label{lem:approx compact by finite projection}
Let $K\in \calK(\calH)$ be compact and $E\subset \ZZ^2$. For any $\ve>0$, there exists a finite subset $F_\ve\subset E$ such that
\eq{
    \|\Lambda_{F_\ve} K - \Lambda_E K\|\leq \ve.
}
\end{lem}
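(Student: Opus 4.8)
The plan is to approximate $K$ by a finite-rank operator and then truncate on the left by a sufficiently large finite subset of $E$. First I would use the fact that $K$ is compact to find a finite-rank operator $K'$ with $\norm{K - K'} \le \ve/3$; concretely, writing $K' = \sum_{j=1}^m \ip{\psi_j}{\cdot}\,\vf_j$ for vectors $\vf_j, \psi_j \in \ell^2(\ZZ^2)$, the vectors $\vf_j$ are genuine $\ell^2$ vectors, so their ``mass'' is concentrated on a finite set up to arbitrarily small error.

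Next I would observe that $\Lambda_E K = \Lambda_E K' + \Lambda_E(K - K')$ and similarly $\Lambda_{F} K = \Lambda_{F} K' + \Lambda_{F}(K - K')$ for any finite $F \subset E$, so that
\eq{
    \norm{\Lambda_F K - \Lambda_E K} \le \norm{\Lambda_F K' - \Lambda_E K'} + \norm{(\Lambda_F - \Lambda_E)(K - K')} \le \norm{\Lambda_F K' - \Lambda_E K'} + \tfrac{2\ve}{3}\,,
}
using $\norm{\Lambda_F - \Lambda_E} \le 1$. It then remains to choose $F = F_\ve$ finite so that $\norm{(\Lambda_E - \Lambda_F) K'} \le \ve/3$. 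Since $(\Lambda_E - \Lambda_F) K' = \sum_j \ip{\psi_j}{\cdot}\,(\Lambda_E - \Lambda_F)\vf_j$ has operator norm bounded by $\sum_j \norm{\psi_j}\,\norm{(\Lambda_E - \Lambda_F)\vf_j}$, and each $\Lambda_E \vf_j \in \ell^2$, for each $j$ there is a finite $F_j \subset E$ with $\norm{(\Lambda_E - \Lambda_{F_j})\vf_j}$ as small as desired; taking $F_\ve := \bigcup_{j=1}^m F_j$ (still finite, still a subset of $E$) and noting $F_\ve \supseteq F_j$ makes $\norm{(\Lambda_E - \Lambda_{F_\ve})\vf_j} \le \norm{(\Lambda_E - \Lambda_{F_j})\vf_j}$ small simultaneously for all $j$, which bounds the remaining term by $\ve/3$.

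There is no real obstacle here; the only mild care needed is the bookkeeping of the three error terms and the observation that the finite-rank approximant can be written with explicit vectors so that ``left multiplication by $\Lambda_E - \Lambda_F$ is small'' reduces to a statement about tails of fixed $\ell^2$ vectors. One could streamline slightly by noting $\norm{\Lambda_F K - \Lambda_E K} = \norm{(\Lambda_E - \Lambda_F) K}$ and that $\Lambda_{E \setminus F_n} \to 0$ strongly along an exhaustion $F_n \uparrow E$, so $\norm{(\Lambda_E - \Lambda_{F_n}) K} \to 0$ by compactness of $K$ (strong convergence times a compact operator converges in norm on the adjoint side: $\norm{(\Lambda_E - \Lambda_{F_n})K} = \norm{K^\ast(\Lambda_E - \Lambda_{F_n})} \to 0$ since $K^\ast$ is compact and $\Lambda_E - \Lambda_{F_n} \to 0$ strongly). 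I would likely present this shorter strong-convergence argument as the main proof and relegate the explicit finite-rank computation to a remark if needed.
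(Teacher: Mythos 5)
Your streamlined strong-convergence argument is essentially the paper's proof: take an exhaustion $F_n\uparrow E$ by finite sets, note $\Lambda_{F_n}\to\Lambda_E$ strongly, and conclude $(\Lambda_E-\Lambda_{F_n})K\to 0$ in norm from compactness. However, the way you justify the last step contains a small slip. The standard fact is that if $A_n\to 0$ \emph{strongly} (with $\sup_n\norm{A_n}<\infty$) and $K$ is compact, then $A_n K\to 0$ in \emph{norm} --- the strongly convergent factor sits on the \emph{left}, exactly as in $(\Lambda_E-\Lambda_{F_n})K$, so no adjoint is needed and the conclusion is immediate. The principle you invoke instead (``compact times strongly-convergent-to-zero converges in norm'') is false in general: e.g.\ with $K=\ip{\psi}{\cdot}\,e_1$ and $A_n$ the left shift by $n$, one has $A_n\to 0$ strongly but $\norm{K A_n}=\norm{A_n^\ast\psi}=\norm{\psi}$. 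Your computation happens to survive because $\Lambda_E-\Lambda_{F_n}$ is self-adjoint, so its strong convergence to zero transfers to the adjoint side, but the adjoint detour both obscures this and misstates the rule. The paper's version avoids the issue by writing $\Lambda_{F_k}K=\Lambda_{F_k}(\Lambda_E K)$ and applying the strong-times-compact fact directly. Your longer finite-rank argument (approximate $K$ by $\sum_j\ip{\psi_j}{\cdot}\,\vf_j$, then truncate the $\ell^2$ tails of the $\vf_j$) is a correct and genuinely different, more hands-on route; it essentially unpacks the proof of the strong-times-compact fact in this special case, which is fine but adds little once you have that fact at your disposal.
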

\begin{proof}
If $E$ is finite, then we are can choose $F$ to be $E$. 
Otherwise, consider an increasing sequence of finite subsets $F_1\subset F_2\cdots$ such that $\bigcup_{k\in\NN}F_k=E$. Then $\Lambda_{F_k}$ converges to $\Lambda_E$ in the strong operator topology. Since $\Lambda_E K$ is compact, it follows that $\Lambda_{F_k}\Lambda_EK=\Lambda_{F_k}K$ converges in norm to $\Lambda{E}\Lambda_EK=\Lambda_EK$. Therefore, for $k$ large enough, we have $\|\Lambda_{F_k}K-\Lambda_EK\|\leq \ve$.
\end{proof}

\begin{cor}\label{cor:contained in cone}
Let $A$ be $L$-local and $\ve>0$ be arbitrary. Let $J\subset\bS^1_\QQ$ be a closed interval. Then there exist two disjoint subsets $E^g$ and $E^b$ partitioning $C_{J^c}$ such that 
\eq{
	\|\Lambda_{E^b} A \Lambda_J\|\leq \ve
}
and for any closed interval $I\subset \bS^1_\QQ$ disjoint from $J$, we have
\eq{
	|E^g\cap C_I|<\infty \,.
}
\end{cor}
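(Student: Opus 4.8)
The plan is to exhaust the complementary cone $C_{J^c}$ from the inside by cones over closed sub-intervals of $\bS^1_\QQ$ that stay away from $J$, and, at each stage, to peel off a \emph{finite} ``good'' piece that already carries all but a tiny part of the (compact) off-diagonal block of $A$ between that sub-cone and $C_J$. Since $\bS^1_\QQ$ is dense in $\bS^1$, I first choose rational-slope points marching toward the two endpoints of $J$ through the complementary arc, producing closed intervals $I_1\subseteq I_2\subseteq\cdots$ of $\bS^1_\QQ$, each disjoint from $J$, with $\bigcup_k I_k=J^c$ and hence $\bigcup_k C_{I_k}=C_{J^c}$. Set $D_1:=C_{I_1}$ and $D_k:=C_{I_k}\setminus C_{I_{k-1}}$ for $k\ge2$, so that $\{D_k\}_{k\in\NN}$ is a partition of $C_{J^c}$.

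The one geometric fact I need is that an arbitrary closed interval $I\subset\bS^1_\QQ$ disjoint from $J$ is eventually contained in the $I_k$: because $I\cap J=\varnothing$ and the endpoints of both intervals are rational-slope, the closed arcs $\overline I$ and $\overline J$ must be disjoint, so $\overline I$ is a compact subset of the open complementary arc and therefore lies inside $I_k$ for all sufficiently large $k$; in particular $C_I\subseteq C_{I_k}=D_1\cup\cdots\cup D_k$ for such $k$. This is the only place where anything needs to be checked carefully, and I expect it to be the main (though mild) obstacle.

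For each $k$, the block $\Lambda_{D_k}A\Lambda_J=\Lambda_{D_k}\bigl(\Lambda_{I_k}A\Lambda_J\bigr)$ is compact by \cref{eq:cone characterization of L locality}, since $I_k$ and $J$ are disjoint closed intervals of $\bS^1_\QQ$. Applying \cref{lem:approx compact by finite projection} to this compact operator with the subset $D_k$ and tolerance $\ve/2^k$ yields a finite $F_k\subseteq D_k$ with $\norm{\Lambda_{F_k}A\Lambda_J-\Lambda_{D_k}A\Lambda_J}\le\ve/2^k$, that is, $\norm{\Lambda_{D_k\setminus F_k}A\Lambda_J}\le\ve/2^k$. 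I then define $E^g:=\bigcup_k F_k$ and $E^b:=C_{J^c}\setminus E^g=\bigcup_k\bigl(D_k\setminus F_k\bigr)$; these are disjoint and partition $C_{J^c}$ by construction.

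It remains to check the two conclusions. The sets $D_k\setminus F_k$ are pairwise disjoint, so the operators $\Lambda_{D_k\setminus F_k}A\Lambda_J$ have mutually orthogonal ranges; hence for every $v\in\calH$,
\[
\norm{\Lambda_{E^b}A\Lambda_J v}^2=\sum_{k}\norm{\Lambda_{D_k\setminus F_k}A\Lambda_J v}^2\le\ve^2\norm{v}^2\sum_{k\ge1}4^{-k}=\tfrac13\,\ve^2\norm{v}^2 ,
\]
which gives $\norm{\Lambda_{E^b}A\Lambda_J}\le\ve$ (the partial sums are Cauchy in norm, so $\Lambda_{E^b}A\Lambda_J$ really is their limit). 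For the finiteness, let $I\subset\bS^1_\QQ$ be a closed interval disjoint from $J$ and choose $k$ with $C_I\subseteq D_1\cup\cdots\cup D_k$; since $E^g\cap D_j=F_j$ for every $j$, we obtain $E^g\cap C_I\subseteq F_1\cup\cdots\cup F_k$, a finite set, as desired.
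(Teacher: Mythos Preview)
Your proof is correct and follows essentially the same strategy as the paper's: exhaust $C_{J^c}$ by cones over closed rational-slope intervals disjoint from $J$, invoke \cref{eq:cone characterization of L locality} for compactness of each block, and use \cref{lem:approx compact by finite projection} to peel off finite good pieces with geometrically decaying remainders. The only cosmetic difference is that you make the annular pieces $D_k$ pairwise disjoint from the outset (and then bound $\Lambda_{E^b}A\Lambda_J$ via orthogonality rather than the triangle inequality), whereas the paper works with the nested cones $C_{N_k^c}$ directly and disjointifies only when summing---but the underlying idea is the same.
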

\begin{proof}
Let $J=[\theta_1,\theta_2]$ be the closed interval subset of $\bS^1_\QQ$ traced from $\theta_1$ to $\theta_2$ counterclockwise. Let 
\eq{
	N_k=[\theta_1-1/2^k,\theta_2+1/2^k]
}
be the intervals slightly larger than $J$ and shirnks toward $J$. By \cref{eq:cone characterization of L locality}, we have $\Lambda_{N^c_k}A\Lambda_J \in \calK$. Using \cref{lem:approx compact by finite projection}, we can decompose $C_{N^c_k}$ into two disjoint subsets
\eq{
	C_{N^c_k} = E^g_k \cup E^b_k
}
such that $|E^g_k|<\infty$ and $\| \Lambda_{E_k^b} A \Lambda_J\|\leq \ve/2^k$. 
Let
\eq{
	E^b = \cup_{k\in\NN} E_k^b \,.
}
We can rewrite $E^b$ as $\cup_{k\in\NN} F_k$ where $F_k = E_k^b\setminus (\cup_{i=1}^{k-1}E_i^b)$ are disjoint. Thus
\eq{
	\|\Lambda_{E^b} A \Lambda_J\| &= \|\sum_{k=1}^\infty \Lambda_{F_k} A \Lambda_J\| \leq \sum_{k=1}^\infty \| \Lambda_{F_k} A \Lambda_J\| \leq \sum_{k=1}^\infty \| \Lambda_{E_k^b} A \Lambda_J\| \leq \ve\,.
}
Define the good set as
\eq{
	E^g = C_{J^c}\setminus E^b\,.
}
We argue that $|E^g\cap C_{N_k^c}|<\infty$ for all $k\in\NN$. Indeed, we have
\eq{
    E^g\cap C_{N_k^c} = C_{N_k^c}\cap (E^b)^c = (E_k^g\cup C_k^b) \cap (E^b)^c = E_k^g\cap (E^b)^c \subset E_k^g\,.
}
If $I$ is a closed interval of $\bS^1_\QQ$, then there exists $k\in\NN$ such that $I\subset C_{N_k^c}$. It follows that $|E^g\cap I|<\infty$.
\end{proof}



The next result is a consequence of boundedness and does not require the locality of the operator.
\begin{lem}\label{cor:contained in annulus}
Let $A$ be a bounded operator on $\calH$ and $\{\ve_i\}_{i=1}^\infty$ be sequence of positive numbers. Let $\{\theta_i\}_{i=1}^\infty$ be a sequence of radians on $\bS^1_\QQ$. Then there exists a sequence $\{x_i\}_{i=1}^\infty$ of points on $\ZZ^2$ such that $\arg(x_i)=\theta_i$ for $i\in\NN$, and a sequence $\{r_i\}_{i=1}^\infty$ of radii $0=:r_0<r_1<r_2<\dots$, such that $x_i\in B_{r_i}\setminus B_{r_{i-1}}$, and 
\eq{
	\|\Lambda_{B_{r_i}^c\cup B_{r_{i-1}}}A (\delta_{x_i}\otimes \delta_{x_i}^\ast)\| \leq \ve_i\,.
}
In other words, the effect of $A$ on $\delta_{x_i}$ is confined in the region $B_{r_i}\setminus B_{r_{i-1}} = \ZZ^2\setminus (B_{r_i}^c\cup B_{r_{i-1}})$.
\end{lem}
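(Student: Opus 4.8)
The statement is purely about a bounded operator $A$ and is essentially a diagonal/exhaustion argument, so the plan is to build the $x_i$ and $r_i$ recursively, one index at a time, using two basic facts: (i) given a fixed angle $\theta_i\in\bS^1_\QQ$ there are infinitely many lattice points $x\in\ZZ^2$ with $\arg(x)=\theta_i$, lying at arbitrarily large distance from the origin, so we can always pick $x_i$ with $\|x_i\|$ as large as we please; and (ii) for any fixed unit vector $\delta_x$, the vector $A\delta_x\in\ell^2(\ZZ^2)$ has $\|\Lambda_{B_r^c}A\delta_x\|\to 0$ as $r\to\infty$, since $\Lambda_{B_r}\to\Id$ strongly and $\|A\delta_x\|<\infty$; likewise $\|\Lambda_{B_r}A\delta_x\|\to \|A\delta_x\|$, but what we actually need is that $\Lambda_{B_{r'}}A\delta_x$ is close in norm to $A\delta_x$ once $r'$ is large, equivalently $\|\Lambda_{B_{r'}^c}A\delta_x\|$ is small once $r'$ is large. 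Note $\delta_{x_i}\otimes\delta_{x_i}^\ast$ is the rank-one projection onto $\CC\delta_{x_i}$, so the quantity to be controlled is exactly $\|\Lambda_{B_{r_i}^c\cup B_{r_{i-1}}}A\delta_{x_i}\|$ (up to the harmless tensor-with-$\delta_{x_i}^\ast$ factor, which has norm one).

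\textbf{The recursion.} Set $r_0:=0$. Suppose $r_0<r_1<\dots<r_{i-1}$ have been chosen (for $i=1$ nothing has been chosen beyond $r_0$). First choose $x_i\in\ZZ^2$ with $\arg(x_i)=\theta_i$ and $\|x_i\|>r_{i-1}$; this is possible by fact (i). Having fixed $x_i$, the vector $A\delta_{x_i}$ is a fixed element of $\ell^2(\ZZ^2)$, so by fact (ii) there is a radius $\rho_i>\|x_i\|$ with $\|\Lambda_{B_{\rho_i}^c}A\delta_{x_i}\|\leq\ve_i$. Put $r_i:=\rho_i$; then $r_i>r_{i-1}$, and $x_i\in B_{r_i}\setminus B_{r_{i-1}}$ because $r_{i-1}<\|x_i\|<\rho_i=r_i$. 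Finally, since $x_i\notin B_{r_{i-1}}$, the set $C_{r_{i-1}}:=B_{r_{i-1}}$ does not contain $x_i$, hence $\Lambda_{B_{r_{i-1}}}\delta_{x_i}=0$, and therefore
\eq{
  \|\Lambda_{B_{r_i}^c\cup B_{r_{i-1}}}A\delta_{x_i}\|
  = \|\Lambda_{B_{r_i}^c}A\delta_{x_i} + \Lambda_{B_{r_{i-1}}}A\delta_{x_i}\|\,.
}
This last quantity is not literally $\|\Lambda_{B_{r_i}^c}A\delta_{x_i}\|$, so one must be slightly more careful: one should choose $r_{i-1}$ at the \emph{previous} step not only to cut off $A\delta_{x_{i-1}}$ but with enough foresight, or — cleaner — one splits the $\ve_i$ budget and notes that $\Lambda_{B_{r_{i-1}}}A\delta_{x_i}$ can be made small by a \emph{separate} use of fact (ii): having chosen $x_i$ with $\|x_i\|$ large, one could equally demand $r_{i-1}$ small relative to where $A\delta_{x_i}$ concentrates, but $r_{i-1}$ is already fixed. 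The honest fix is to choose $x_i$ far out: more precisely, after $r_{i-1}$ is fixed, the set $B_{r_{i-1}}$ is a fixed finite set, so $\|\Lambda_{B_{r_{i-1}}}A\delta_{x}\|=\|\Lambda_{B_{r_{i-1}}}A\Lambda_{\{x\}}\|\to 0$ as $\|x\|\to\infty$ along $\arg(x)=\theta_i$, because $\Lambda_{B_{r_{i-1}}}A$ is a fixed operator with $\Lambda_{B_{r_{i-1}}}$ finite-rank hence $\Lambda_{B_{r_{i-1}}}A$ compact, so its restriction to the weakly-null sequence $\{\delta_x\}$ tends to zero in norm. Hence one first picks $x_i$ with $\arg(x_i)=\theta_i$, $\|x_i\|>r_{i-1}$, and $\|\Lambda_{B_{r_{i-1}}}A\delta_{x_i}\|\leq\ve_i/2$, and then picks $r_i>\|x_i\|$ with $\|\Lambda_{B_{r_i}^c}A\delta_{x_i}\|\leq\ve_i/2$; the triangle inequality gives the bound $\ve_i$.

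\textbf{Main obstacle.} There is no deep obstacle here — the lemma is a routine exhaustion argument and uses neither $L$-locality nor any structure beyond boundedness of $A$, exactly as the paper remarks. The one point requiring attention is the interplay of the two cutoffs $B_{r_i}^c$ and $B_{r_{i-1}}$: the annulus $B_{r_i}\setminus B_{r_{i-1}}$ where the action is confined is controlled from the inside by choosing $x_i$ far enough out that the (fixed, finite-rank) inner cutoff $\Lambda_{B_{r_{i-1}}}$ barely sees $A\delta_{x_i}$, and from the outside by the usual strong-convergence $\Lambda_{B_r}\to\Id$. Once the order of quantifiers is arranged as above — fix $r_{i-1}$, then $x_i$, then $r_i$ — everything goes through, and the resulting sequences $\{x_i\}$, $\{r_i\}$ have exactly the asserted properties. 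I would write this up as a short induction with the $\ve_i/2$ split, citing the compactness of $\Lambda_{B_{r_{i-1}}}A$ for the inner estimate and strong convergence of finite-rank projections for the outer one.
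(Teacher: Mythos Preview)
Your proposal is correct and follows essentially the same route as the paper. Both arguments proceed by recursion on $i$, control the outer cutoff $\Lambda_{B_{r_i}^c}A\delta_{x_i}$ via strong convergence $\Lambda_{B_r}\to\Id$, and control the inner cutoff $\Lambda_{B_{r_{i-1}}}A\delta_{x_i}$ using that $\Lambda_{B_{r_{i-1}}}A$ is compact (finite-rank composed with bounded); the paper packages the inner estimate through an auxiliary radius $t_{i-1}$ with $\|\Lambda_{B_{r_{i-1}}}A\Lambda_{B_{t_{i-1}}^c}\|\leq\ve_i/2$ and then places $x_i$ outside $B_{t_{i-1}}$, whereas you invoke the same compactness directly on the weakly-null sequence $\delta_x$ to choose $x_i$ far enough out --- a purely cosmetic difference.
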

\begin{proof}
Let us pick an element $x_1\in\ZZ^2$ such that $\arg(x_1)=\theta_1$. There exists $r_1>\|x_1\|$ such that 
\eq{
\|\Lambda_{B_{r_1}^c} A \delta_{x_1}\otimes (\delta_{x_1})^\ast\|\leq \ve_1 \,.
}
Indeed, this follows from \cref{lem:approx compact by finite projection} with the fact that $A \delta_{x_1}\otimes (\delta_{x_1})^{\ast}$ is compact and $\Lambda_{B_{r}}$ converges to $\Id$ strongly as $r\to\infty$. 
Since $|B_{r_1}|$ is finite, which implies $A^\ast \Lambda_{B_{r_1}}$ is compact, it follows from \cref{lem:approx compact by finite projection} again that there exists $t_1>r_1$ such that $\|\Lambda_{B_{t_1}^c}A^\ast\Lambda_{B_{r_1}}\|\leq \ve_2/2$, or
\eq{
    \| \Lambda_{B_{r_1}} A \Lambda_{B_{t_1}^c}\|\leq \ve_2/2 \,.
}
Pick $x_2\in\ZZ^2$ with $\|x_2\|> t_1$. There exists $r_2>\|x_2\|$ such that
\eq{
    \|\Lambda_{B_{r_2}^c} A (\delta_{x_2}\otimes \delta_{x_2}^\ast)\| \leq \ve_2/2
}
which follows again from \cref{lem:approx compact by finite projection}. Therefore, we have
\eq{
    \|\Lambda_{B_{r_2}^c\cup B_{r_1}} A (\delta_{x_2}\otimes \delta_{x_2}^\ast)\| \leq \|\Lambda_{B_{r_2}^c} A (\delta_{x_2}\otimes \delta_{x_2}^\ast)\| + \|\Lambda_{B_{r_1}} A \Lambda_{B_{t_1}^c}\| \leq \ve_2
}
where we used $\delta_{x_2}\otimes \delta_{x_2}^\ast\leq \Lambda_{B_{t_1}^c}$ in the first inequality. 

We iterate the procedure: we pick $t_2>r_2$ such that $\|\Lambda_{B_{t_2}^c} A^\ast \Lambda_{B_{r_2}}\|\leq \ve_3/2$; pick $x_3\in\ZZ^2$ with $\|x_3\|>t_2$; and pick $r_3>\|x_3\|$ such that $\|\Lambda_{B_{r_3}^c} A (\delta_{x_3}\otimes \delta_{x_3}^\ast)\| \leq \ve_3/2$; and we get $\|\Lambda_{B_{r_3}^c\cup B_{r_2}} A (\delta_{x_3}\otimes \delta_{x_3}^\ast)\|\leq \ve_3$; and so on.
\end{proof}

\cref{cor:contained in annulus} tries to control the effect of a bounded operator on a lattice point to an annulus, using only the boundedness of the operator, i.e., without using $L$-locality.

We would like the remove all those hopping $PAQ$ from $\im Q$ to $\im P$ when $\|PAQ\|$ is small. When $P,Q\subset\ZZ^2$, we are turning off matrix elements in the infinite matrix $A_{xy}:=\ip{\delta_x}{A\delta_y}$. To achieve this, we need the following lemma. 

\begin{lem}\label{lem:turn off elements}
Let $A$ be $L$-local and $\ve>0$ be arbitrary. Let $\{P_k\}_{k=1}^\infty$ and $\{Q_k\}_{k=1}^\infty$ be two sequences of projections such that $[P_k,L]=[Q_k,L]=0$ for all $k\in\NN$ and
\eql{\label{eq:norm bound of Pk A Qk}
    \norm{P_k A Q_k}\leq \frac{\ve}{2^{2k-1}},~ \forall k\in\NN \,.
}
Then there exists a $L$-local operator $B$ such that $P_kB Q_k=0$ for all $k\in\NN$, and $\| A-B\|\leq \ve$.
\end{lem}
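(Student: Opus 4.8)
The natural approach is to turn off the corners one at a time and show that the accumulated perturbation stays below $\ve$. Since $[P_k,L]=[Q_k,L]=0$, each $P_k,Q_k$ lies in $\calL_L$; as $\calL_L$ is a $C^\ast$-algebra it is closed under products, sums and norm limits, which I use freely. Set $B_0:=A$ and inductively $B_k:=B_{k-1}-D_k$ with $D_k:=P_kB_{k-1}Q_k$. Because $P_k^2=P_k$ and $Q_k^2=Q_k$ one gets $P_kB_kQ_k=0$, and $B_k\in\calL_L$ for every $k$. I would like to set $B:=\lim_k B_k=A-\sum_{k\ge 1}D_k$, so the proof reduces to the single estimate
\eql{\label{eq:plan total bound}
\sum_{k\ge1}\|D_k\|\le\ve .
}
Granting \cref{eq:plan total bound}, the series $\sum_k D_k$ converges in norm, $B$ exists, lies in $\calL_L$ (norm limit of elements of $\calL_L$), satisfies $\|A-B\|\le\ve$, and for each $m$ one computes $P_mBQ_m=P_mB_mQ_m-\sum_{j>m}P_mD_jQ_m=-\sum_{j>m}P_mD_jQ_m$, so it remains to see that this tail vanishes.

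To analyse both the estimate and the vanishing tails, unfold $B_{k-1}=A-\sum_{j<k}D_j$ to obtain
\eql{\label{eq:plan unfold}
D_k=P_kAQ_k-\sum_{j<k}P_kD_jQ_k ,\qquad P_kD_jQ_k=(P_kP_j)\,D_j\,(Q_jQ_k),
}
the last identity because $D_j=P_jD_jQ_j$ is ``supported on the $j$-th corner''. Thus $\|D_k\|\le \ve\,2^{-(2k-1)}+\sum_{j<k}\|P_kD_jQ_k\|$, and the whole point is that turning off the $j$-th corner barely disturbs the others. The naive bound $\|P_kD_jQ_k\|\le\|D_j\|$ is useless: with it one only gets $\|D_k\|\lesssim 2^k$ and \cref{eq:plan total bound} fails. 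One must instead exploit that distinct corners $P_k{\cdot}Q_k$ overlap only weakly. In the two situations where this lemma is applied this is built in: in the ``annulus'' family the domain projections are pairwise orthogonal rank-one projections $\delta_{x_k}\otimes\delta_{x_k}^\ast$, so those corners do not interfere with one another at all; in the ``cone'' family the $Q_{2k-1}=\Lambda_{J_k}$ and $P_{2k-1}=\Lambda_{E^b_k}$ (with $E^b_k\subset C_{J_k^c}$) can be, and are, chosen with the cones $J_k$ nested so that after all corners ``at a given scale'' are processed $B$ is block-diagonal with respect to the corresponding cones, whence a later correction $D_j$ (living between a cone and its sibling) is invisible to every strictly coarser corner; and $L$-locality of every $B_{k-1}$, via \cref{eq:cone characterization of L locality}, makes the residual cross-corner pieces compact, so by the choices in \cref{cor:contained in cone} and \cref{cor:contained in annulus} they are $\le \ve\,2^{-(2k-1)}$. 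Feeding these bounds back into \cref{eq:plan unfold} and summing, the geometric series $\sum_k\ve\,2^{-(2k-1)}=\tfrac23\ve$ absorbs the accumulated cross-terms and yields \cref{eq:plan total bound}; the same ``weak overlap'' forces $\sum_{j>m}P_mD_jQ_m=0$, so $P_mBQ_m=0$ for every $m$.

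\textbf{Main obstacle.} Everything hinges on the middle step: proving that the cross-terms $P_kD_jQ_k$ in \cref{eq:plan unfold} do not pile up. This is the only place where one cannot argue purely algebraically but must use the structure of the $P_k,Q_k$ (mutual orthogonality in one family; nesting of cones together with $L$-locality and \cref{cor:contained in cone}, \cref{cor:contained in annulus} in the other), and it is exactly here that the precise exponent $2^{2k-1}$ in the hypothesis is consumed. Once that interference is controlled, the remainder — convergence of $\sum_kD_k$, the bound $\|A-B\|\le\ve$, membership $B\in\calL_L$, and $P_mBQ_m=0$ — is routine bookkeeping plus the closedness of $\calL_L$.
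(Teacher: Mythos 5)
Your iterative construction $B_k = B_{k-1} - P_k B_{k-1} Q_k$ is exactly the paper's construction in disguise: unfolding it gives precisely the inclusion--exclusion formula the paper writes, and the recursion $S_{n+1}=S_n + P_{n+1}AQ_{n+1}-P_{n+1}S_nQ_{n+1}$ that you and the paper both use is the same thing. So the setup is right. The gap is in the estimate. You correctly observe that the coarse recursion $\|D_k\|\le\ve_k+\sum_{j<k}\|D_j\|$ gives exponential blowup, but you then conclude — wrongly — that no purely algebraic bound is available and that one must appeal to application-specific geometry (orthogonality of the rank-one $Q$'s, nesting of cones, \cref{cor:contained in cone}, \cref{cor:contained in annulus}). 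The paper's proof makes no such appeal and succeeds algebraically by bounding the fully unfolded $D_k$ term by term, not via the recursion: after expansion, $D_k$ is a signed sum of exactly $2^{k-1}$ operators of the form $\pm P_{k}P_{k_{l-1}}\cdots P_{k_1}\,A\,Q_{k_1}\cdots Q_{k_{l-1}}Q_k$ with $k_1<\dots<k_{l-1}<k$, and each such term — since the $P_i$'s commute among themselves and likewise the $Q_i$'s (they are all diagonal in the position basis in the intended use) — can be rearranged to place $P_kAQ_k$ in the middle flanked by contractions, yielding the bound $\ve_k=\|P_kAQ_k\|$. Thus $\|D_k\|\le 2^{k-1}\ve_k$, and the exponent $2^{2k-1}$ in \cref{eq:norm bound of Pk A Qk} is chosen precisely to beat this count: $2^{k-1}\cdot\ve/2^{2k-1}=\ve/2^k$, whose sum is $\ve$. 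That is the missing idea; the recursion-based estimate is simply too lossy, and the fix is to count terms by their \emph{largest} index and bound each term by the $\ve$ of that index.

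Two further comments. First, your claim ``one cannot argue purely algebraically but must use the structure of the $P_k,Q_k$'' is therefore false as far as the lemma's proof goes; what the paper does tacitly use — and you should too if you want a clean proof — is that the $P_k$'s commute with one another and the $Q_k$'s commute with one another (this is what allows one to pull $P_k$ and $Q_k$ next to $A$ inside a long product). The lemma does not state this hypothesis explicitly, but it holds in both places the lemma is invoked because all the projections are diagonal in the position basis. Second, your verification that $P_mBQ_m=0$ by expanding the tail $\sum_{j>m}P_mD_jQ_m$ is unnecessarily awkward; it is cleaner, as the paper does, to show by induction on $n$ via the recursion that $P_mS_nQ_m=P_mAQ_m$ for all $n\ge m$, and then pass to the norm limit.
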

\begin{proof}
We would have liked to define $B$ as \eq{
    A - \sum_{i} P_i A Q_i\,.
} However, this formula may fail to represent the operator we want since the range of the projections $P_i,P_j$ or $Q_i,Q_j$ may overlap, which would mean we over-delete elements. To remedy this problem, inspired by the inclusion–exclusion formula, we define
\eql{
	S_n&:=\sum_{i=1}^n P_i AQ_i - \sum_{1\leq i<j\leq n}P_iP_j A Q_iQ_j \notag \\
	&\quad + \sum_{1\leq i<j<k\leq n}P_iP_jP_k A Q_iQ_jQ_k - \dots + (-1)^{n-1}P_1\dots P_n A Q_1\dots Q_n \notag \\
	&= \sum_{i=1} (-1)^{i+1}\left(\sum_{1\leq k_1<\dots<k_i\leq n} P_{k_1}\dots P_{k_i} A Q_{k_1}\dots Q_{k_i}\right) \label{eq:inclusion exclusion}
} which corrects all the over-counting, so that at least heuristically $B := A - S_\infty$. More precisely, let $\ve_k:=\|P_k AQ_k\|$. Then
\eql{\label{eq:norm bound on S_n}
	\|S_n\|\leq \sum_{k=1}^n 2^{k-1}\ve_k\,.
}
For example, we have $\|S_1\|=\|P_1 AQ_1\|=\ve_1$, and 
\eq{
\|S_2\|=\|P_1AQ_1+P_2 AQ_2-P_1P_2AQ_1Q_2\| \leq \ve_1+2\ve_2
}
where we used $\|P_1P_2AQ_1Q_2\|\leq \|P_2AQ_2\|=\ve_2$. Fix $l$, we count the number of terms $P_{k_1}\dots P_{k_l}AQ_{k_1}\dots Q_{k_l}$ in $S_n$ with $k_1<\dots <k_l$ having $k_l=m$. Then use the fact that
\eq{
	\|P_{k_1}\dots P_{k_l}AQ_{k_1}\dots Q_{k_l}\|\leq \|P_{k_l}AQ_{k_l}\|=\ve_m \,.
}
There are $2^{m-1}$ number of terms of that form. Let $S=\lim_{n\to\infty} S_n$. We need to show that the limit exists. To that end, for $m>n$, consider $S_m-S_n$. Using the formula \cref{eq:inclusion exclusion} and idea leading to upper bound \cref{eq:norm bound on S_n}, all the terms $P_{k_1}\dots P_{k_l}AQ_{k_1}\dots Q_{k_l}$ in $S_m-S_n$ will have $k_l\geq n+1$. Using \cref{eq:norm bound of Pk A Qk}, we have
\eq{
	\|S_m-S_n\| &\leq 2^n\ve_{n+1}+2^{n+1}\ve_{n+2}+\dots + 2^{m-1}\ve_m \\
	&\leq \frac{\ve}{2^{{n+1}}} +\dots + \frac{\ve}{2^m} \\
	&\leq \ve \sum_{k=n+1}^\infty \frac{1}{2^k}
}
which converges to $0$ as $n\to\infty$ (independent of $m$). 

We have $\|S\|\leq \ve$. Indeed, from \cref{eq:norm bound on S_n} and \cref{eq:norm bound of Pk A Qk}, we have
\eq{
	\|S\|\leq \sum_{k=1}^\infty 2^{k-1}\frac{1}{2^{k-1}}\frac{\ve}{2^k} \leq \ve \,.
}
Define
\eq{
    B = A-S \,.
}
Let us now show that
\eq{
	P_k B Q_k = 0,\ \forall k\in\NN
}
to prove that $B$ is indeed unaffected by the interactions originally present in $P_kAQ_k$. We show that $P_kS_nQ_k=P_kAQ_k$ for all $n\geq k$ by induction. From \cref{eq:inclusion exclusion}, we have the recursion relation
\eql{
	S_{n+1} = S_n + P_{n+1}AQ_{n+1} - P_{n+1}S_nQ_{n+1} \label{eq:recursion relation}
}
whre $P_{n+1}AQ_{n+1}$ is from the first sum in \cref{eq:inclusion exclusion}, and $P_{n+1}S_nQ_{n+1}$ is from the rest of sums. Let $k\geq 1$ be arbitrary. It holds that $P_1S_1Q_1=P_1AQ_1$. Take $n=k$ in \cref{eq:recursion relation} and consider
\eq{
	P_{k+1} S_{k+1} Q_{k+1} &= P_{k+1} S_kQ_{k+1} + P_{k+1}P_{k+1}AQ_{k+1}Q_{k+1} - P_{k+1}P_{k+1}S_kQ_{k+1}Q_{k+1} \\
	&= P_{k+1}AQ_{k+1} \,.
}
Thus $P_kS_kQ_k=P_kAQ_k$ holds for all $k\geq 1$. Suppose $P_kS_{n}Q_k=P_kAQ_k$ holds. Using \cref{eq:inclusion exclusion}, we have
\eq{
	P_kS_{n+1}Q_k &= P_kS_nQ_k + P_kP_{n+1}AQ_{n+1}Q_k - P_kP_{n+1}S_nQ_{n+1}Q_k \\
	&=P_kS_nQ_k + P_{n+1}P_kAQ_kQ_{n+1} - P_{n+1}P_kS_nQ_kQ_{n+1} \\
	&= P_kAQ_k
}
where in the last equality we used the induction assumption.

If $A$ is local, then $B=A-S$ is also local, since we are merely turning off some matrix elements (in the position basis). Indeed, by formula \cref{eq:inclusion exclusion}, we have $[S_n,L]\in\calK$ for all $n\geq 1$. Since $S_n\to S$ in norm, it follows that $S$ is also local.
\end{proof}

\section{\texorpdfstring{The $K$-theory of the local algebra}{The K-theory of local algebra}}
\label{sec:K-theory of the R-local algebra}

Let $\calC_L$ be the $C^\ast$-algebra generated by the Laughlin operator $L$ as given in \cref{subsubsection:concrete rep}. By the spectral theorem \cite[Theorem 4.30]{Douglas1998}, we have the isomorphism of $C^\ast$-algebras $\calC_L\cong C(\bS^1)$, where we used that $\sigma(L)=\bS^1$. There is a natural representation $\rho$ of $C(\bS^1)$ on $\calB(\ell^2(\ZZ^2))$ given by
\eq{
    C(\bS^1) &\cong \calC_L \hookrightarrow \calB(\ell^2(\ZZ^2))\\
    (z\mapsto z)&\mapsto L\,.
}
Furthermore, the representation $\rho$ is \emph{ample} in the sense that $\rho$ is unital and that $\rho(a)$ is compact only if $a=0$. Indeed, the only compact operator in $\calC_L$ is the zero operator. We can define the dual algebra of $C(\bS^1)$ with respect to $\rho$ as
\eql{\label{eq:dual of C(S)}
    \calD_\rho(C(\bS^1)):= \Set{A\in\calB(\calH)| [A,\rho(a)]\in\calK(\ell^2(\ZZ^2)),~\forall a\in C(\bS^1)}\,.
}
It is clear that, actually, we have
\eql{\label{eq:L local algeras is the dual algebra of C(S)}
    \calL_L = \calD_\rho(C(\bS^1)) \,.
}

To establish the next proposition, let us briefly define the $\Ext(X)$ commutative semigroup, where $X$ is a non-empty compact subset of $\CC$; see e.g. \cite[Definition 2.4.3]{higson2000analytic}. Consider the set of all essentially normal operators $T\in \calB(\calH)$ that have essential spectrum $X$. Here $\calH$ can be any separable Hilbert spaces. Two such operators $T_1\in \calB(\calH_1)$ and  $T_2\in \calB(\calH_2)$ are said to be essentially unitary equivalent iff there exists a unitary operator $U:\calH_1\to \calH_2$ such that $T_1-U^\ast T_2U\in\calK(\calH_1)$. We can quotient the set of all essentially normal operators having essential spectrum $X$ over the essentially unitary equivalence. For two class $[T_1]$ and $[T_2]$ where and $T_1,T_2$ are essentially normal operators in $\calB(\calH_1)$ and $\calB(\calH_2)$, respectively, we can consider $T_1\oplus T_2$ as operator in $\calB(\calH_1\oplus \calH_2)$. Then $T_1\oplus T_2$ is also an essentially normal operator with essential spectrum $X$. This addition operation makes the quotient set into a commutative semigroup denoted by $\Ext(X)$.

\begin{prop}\label{prop:K0 group of L local algebra}
We have an isomorphism $K_0(\calL_L)\cong \ZZ$ given by $[P]_0\mapsto \calN_L(P)\equiv \findex (PLP+P^\perp)$.
\end{prop}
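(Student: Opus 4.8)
The plan is to identify $K_0(\calL_L)$ with $K_0$ of the dual algebra $\calD_\rho(C(\bS^1))$ via the identification \cref{eq:L local algeras is the dual algebra of C(S)}, and then use the classical duality between the $K$-homology of $C(\bS^1)$ (equivalently, the Ext group of the circle) and the $K$-theory of this dual algebra. Concretely, the standard fact (see \cite[Chapter 5]{higson2000analytic}) is that for a separable nuclear $C^\ast$-algebra $\calA$ with ample representation $\rho$, one has $K_i(\calD_\rho(\calA)) \cong K^{i-1}(\calA) = KK^{i-1}(\calA,\CC)$, the (reduced or unreduced, depending on unitality conventions) $K$-homology of $\calA$; in particular $K_0(\calD_\rho(C(\bS^1))) \cong K^1(C(\bS^1)) \cong K_0(C_0(\RR)) \cong \ZZ$, where the generator is the class of the Toeplitz extension. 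So the first step is to invoke this isomorphism and reduce to a bookkeeping problem about what the generator is.

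The second step is to make the isomorphism concrete through $\Ext(\bS^1)$, which the excerpt has already set up. By the Brown--Douglas--Fillmore theorem $\Ext(\bS^1) \cong \ZZ$, with the isomorphism $[T] \mapsto \findex(T - \lambda)$ for $\lambda$ in the bounded component of $\CC \setminus \bS^1$ (i.e. the winding number / Fredholm index of $T$ minus an interior point). The link to our situation: given a projection $P \in \calP(\calL_L)$, the operator $PLP + P^\perp$ acting on $\calH$ is essentially normal with essential spectrum $\bS^1$ (locality of $P$ gives $[P,L]$ compact, hence $[PLP+P^\perp, (PLP+P^\perp)^\ast]$ compact, and one checks the essential spectrum is all of $\bS^1$ using ampleness — actually one should be a little careful here, and it may be cleaner to work with $PLP$ as an operator on $\im P$, using $L$-locality to see it is essentially unitary on $\im P$). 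The map $P \mapsto [PLP|_{\im P}]_{\Ext}$ should realize the BDF isomorphism composed with the duality isomorphism, and under it $[P]_0 \mapsto \findex(PLP + P^\perp) = \calN_L(P)$. I would assemble this by: (i) showing $\calN_L$ is a well-defined group homomorphism $K_0(\calL_L) \to \ZZ$ (additivity under direct sums is clear, homotopy invariance is clear since the Fredholm index is locally constant, and it factors through $K_0$ because it vanishes on trivial projections of the form $\Lambda_S$ with... no — rather because it respects Murray--von Neumann equivalence and stabilization, which it does since $\findex$ is a unitary invariant and $\findex(\Id) = 0$); (ii) showing surjectivity, which is exactly \cref{lem:projection pi0 surjective} (the construction there via the shift produces projections of every index); (iii) showing injectivity, which is the real content and is where the duality/BDF input is needed — that a projection with $\calN_L(P) = 0$ is stably Murray--von Neumann trivial in $\calL_L$.

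For injectivity I would argue as follows. If $\calN_L(P) = 0$, then $PLP + P^\perp$ has Fredholm index $0$; I want to conclude $[P]_0 = [P']_0$ for some $P'$ that is visibly trivial in $K_0$. The cleanest route is the Ext picture: $[PLP|_{\im P}] = 0$ in $\Ext(\bS^1)$ means $PLP|_{\im P}$ is, after adding a trivial summand, essentially unitarily equivalent to a normal operator with spectrum $\bS^1$ — concretely to the multiplication operator $\rho_0(z)$ on some $L^2(\bS^1,\mu)$ with $\mu$ having support $\bS^1$ — equivalently $P$ lies in the connected component of ampleness-preserving projections that are "absorbed" by the $0$ representation. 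Then Voiculescu's theorem (or \cite[Proposition 5.1.4]{higson2000analytic}, already cited in the excerpt) gives that $P$ is Murray--von Neumann equivalent within $\calL_L$ to a projection $Q$ with $QLQ|_{\im Q}$ unitarily equivalent to $\rho(g)L$ for suitable $g$, i.e. $Q$ commutes with $L$ up to the appropriate structure, placing $[Q]_0$ in the image of $K_0(\calC_L) = K_0(C(\bS^1)) = \ZZ$; but this copy of $\ZZ$ maps to $0 \in \ZZ$ under $\calN_L$ because operators commuting with $L$ give index $0$. Hence $[P]_0 = 0$. I expect the main obstacle to be precisely this injectivity step: packaging the BDF/Voiculescu machinery cleanly so that "index zero" translates into "stably equivalent to a projection commuting with $L$", and verifying that the resulting homomorphism $\ZZ \to K_0(\calL_L)$ is a genuine inverse rather than merely a one-sided inverse — this requires knowing $K_0(\calL_L)$ is not larger than $\ZZ$, which is exactly the duality computation $K_0(\calD_\rho(C(\bS^1))) \cong \ZZ$ one must either cite or reprove via the six-term sequence associated to the Toeplitz-type extension $0 \to \calK \to \calL_L \cap (\text{something}) \to \dots$, or more simply via $\Ext(\bS^1) \cong \ZZ$ together with the general identification $K_0$ of the dual algebra $\cong \Ext$ of the spectrum.
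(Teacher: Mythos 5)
Your proposal follows essentially the same route as the paper: invoke Paschke duality to identify $K_0(\calL_L)=K_0(\calD_\rho(C(\bS^1)))$ with $\Ext(\bS^1)$, and then use the Brown--Douglas--Fillmore identification $\Ext(\bS^1)\cong\ZZ$ via the Fredholm index. The paper simply cites \cite[Proposition 5.1.6]{higson2000analytic} which already gives the isomorphism $K_0(\calD_\rho(C(\bS^1)))\cong\Ext(\bS^1)$ \emph{together with} the explicit formula $[P]_0\mapsto[PLP|_{\im P}]$, and then cites \cite[Example 2.4.5 and Proposition 2.4.6]{higson2000analytic} for $\Ext(\bS^1)\cong\ZZ$ via $[A]\mapsto\findex(A)$ --- so the well-definedness, surjectivity, and injectivity steps you sketch (and correctly flag as circular without the duality computation) are all subsumed by that single citation, and there is no need to re-run the Voiculescu/BDF machinery by hand.
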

\begin{proof}
Using \cite[Proposition 5.1.6]{higson2000analytic}, there is an isomorphism of abelian groups
\eql{\label{eq:K0 of C(S) and Ext group}
    K_0(\calD_\rho(C(\bS^1))) \cong \Ext(\bS^1)
}
where $\Ext(\bS^1)$ here is, in fact, an abelian group. The isomorphism \cref{eq:K0 of C(S) and Ext group} is given by
\eq{
    [P]_0 \mapsto [PLP:\im P\to \im P]\,.
}
On the other hand, the $\Ext(\bS^1)$ abelian group is isomorphic to $\ZZ$, where the isomohpism is given by $[A]\mapsto \findex(A)$; see \cite[Example 2.4.5 and Proposition 2.4.6]{higson2000analytic}. Thus, we have the isomorphism $K_0(\calL_L)\cong \ZZ$ given by $[P]_0\mapsto \findex(PLP+P^\perp)$.
\end{proof}

Let us now discuss the notion of a dual in somewhat more abstract terms. In general, for a separable, unital $C^\ast$-algebra $\calA$, we define its \emph{dual} algebra as
\eql{\label{eq:dual algebra def}
    \calD(\calA)=\Set{T\in\calB(\calH)|[T,\rho(a)]\in\calK(\calH),~\forall a\in\calA}
}
for any choice of an \emph{ample} representation $\rho:\calA\to \calH$ on some separable Hilbert space $\calH$, in the sense that $\rho$ is unital and $\rho(a)$ is compact only if $a=0$. See \cite[Definition 5.1.1,5.1.3]{higson2000analytic}. In fact, the dual algebra \cref{eq:dual algebra def} is independent of the choice of ample representation used to define it; see \cite[Section 5.2]{higson2000analytic}. Let $\calA$ be a separable $C^\ast$-algebra, possibly without unit, and let $\ti \calA$ be the $C^\ast$-algebra with a unit adjoined. \cite[Definition 5.2.7]{higson2000analytic} defines the \emph{$K$-homology} group of $\calA$ to be
\eql{\label{eq:def of K homology groups}
    K^0(\calA)=K_1(\calD(\ti \calA)),\quad K^1(\calA)=K_0(\calD(\ti\calA))\,.
}
In the case when $\calA$ is abelian, we have
\eql{\label{eq:Bott periodicity for K homology}
    K^0(\calS (\calA))\cong K^1(\calA),\quad K^1(\calA)=K^0(\calS(\calA))
}
where $\calS(\calA)$ is the suspension of $\calA$; see e.g. \cite[Eq. 7.2.7]{higson2000analytic}.

\begin{prop}\label{prop:K1 group of L local algebra}
The $K_1$ group of $\calL_L$ is trivial.
\end{prop}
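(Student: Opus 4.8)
The plan is to identify $\calL_L$ with the dual algebra of $C(\bS^1)$, rewrite $C(\bS^1)$ as a unitization, and thereby express $K_1(\calL_L)$ as an odd $K$-homology group of $\RR$, which vanishes by Bott periodicity together with the computation of the $K$-homology of a point.

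\textbf{Step 1 (reduction to $K$-homology).} By \cref{eq:L local algeras is the dual algebra of C(S)}, the independence of the dual algebra from the choice of ample representation \cite[Section 5.2]{higson2000analytic}, and the fact noted above that $\rho$ is ample, we have $\calL_L = \calD(C(\bS^1))$. Now $C(\bS^1) \cong \widetilde{C_0(\RR)}$: the algebra $C_0(\RR) \cong C_0(\bS^1\setminus\{1\})$ is separable and non-unital, and its unitization is $C$ of the one-point compactification of $\RR$, namely $\bS^1$. Feeding the \emph{non-unital} algebra $\calA := C_0(\RR)$ into the definition \cref{eq:def of K homology groups} of $K$-homology then yields
\[
    K_1(\calL_L) \;=\; K_1\!\bigl(\calD(\widetilde{C_0(\RR)})\bigr) \;=\; K^0\!\bigl(C_0(\RR)\bigr).
\]

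\textbf{Step 2 (Bott periodicity and the point).} Since $C_0(\RR) = \calS(\CC)$ is the suspension of $\CC$ and $\CC$ is abelian, the periodicity isomorphism \cref{eq:Bott periodicity for K homology} gives $K^0(C_0(\RR)) = K^0(\calS(\CC)) \cong K^1(\CC)$, so it remains only to see that the odd $K$-homology of a point vanishes, $K^1(\CC) = 0$. This is standard \cite{higson2000analytic}, and is easy to see directly: by definition $K^1(\CC) = K_0(\calD(\CC\oplus\CC))$, and for the ample representation $(\lambda,\mu)\mapsto\lambda\Lambda+\mu\Lambda^\perp$ with $\Lambda,\Lambda^\perp$ both infinite-dimensional one has $\calD(\CC\oplus\CC) = \{T\in\calB(\calH):[T,\Lambda]\in\calK\}$. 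The ideal $M_2(\calK)$ sitting inside $\calD(\CC\oplus\CC)$ has quotient two copies of the Calkin algebra $\calQ$, and in the six-term sequence of this extension the boundary map $\ZZ^2 = K_1(\calQ)^{\oplus2}\to K_0(\calK)=\ZZ$ is the sum-of-indices map, hence surjective; exactness then forces $K_0(\calD(\CC\oplus\CC)) = 0$ (and, as a byproduct, $K_1(\calD(\CC\oplus\CC))\cong\ZZ$, recovering \cite{CareyHurstOBrien1982}). Chaining Steps 1 and 2 gives $K_1(\calL_L) = 0$.

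The only delicate point is the bookkeeping in Step 1: one must apply \cref{eq:def of K homology groups} to the non-unital algebra $C_0(\RR)$, so that its unitization is \emph{exactly} $C(\bS^1)$ and hence $\calD(\widetilde{C_0(\RR)}) = \calL_L$ on the nose rather than up to stabilization or an extra matrix block. Once that identification is pinned down, the remainder is purely formal and rests entirely on the vanishing $K^1(\mathrm{pt}) = 0$; no geometry of $L$-local operators is needed for this proposition, in contrast to \cref{thm:R-local unitaries are path-connected} which upgrades this $K_1$-vanishing to the genuine path-connectedness statement.
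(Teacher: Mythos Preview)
Your proof is correct and follows essentially the same chain of identifications as the paper: $\calL_L \cong \calD(C(\bS^1))$, then $K_1(\calD(\widetilde{C_0(\RR)})) = K^0(C_0(\RR))$, then Bott periodicity $K^0(\calS(\CC)) \cong K^1(\CC)$, then $K^1(\CC)=0$. The only difference is that in the last step the paper simply cites \cite[Example 5.2.9]{higson2000analytic}, whereas you spell out the six-term exact sequence for $0\to\calK(\calH)\to\calD(\CC\oplus\CC)\to\calQ\oplus\calQ\to 0$ and identify the index map as the surjection $\ZZ^2\to\ZZ$; this is a welcome elaboration but not a different strategy.
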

\begin{proof}
We have
\eq{
    K_1(\calL_L)\cong K_1(\calD(C(\bS^1))) \equiv K^0(C_0(\RR)) \cong K^1(\CC)\equiv K_0(\calD(\ti \CC)) \cong 0
}
where in the first isomorphism, we used \cref{eq:L local algeras is the dual algebra of C(S)}; the second equivalence is by definition \cref{eq:def of K homology groups}, and the fact that $C(\bS^1)$ is the unitization of $C_0(\RR)$; the third isomorphism is by \cref{eq:Bott periodicity for K homology} and that $C_0(\RR)$ is the suspension of $\CC$; and the final isomorphism can be computed as in \cite[Example 5.2.9]{higson2000analytic}.
\end{proof}
\begin{cor}\label{cor:consequence of trivial K_1 group for L-local algebra}
    For any unitary $U\in\calU(\calL_L)$ there exists some $n\in\NN$ such that $U\oplus \Id_n \sim_h \Id_{n+1}$ in $\calU_{n+1}(\calL_L)$.
\end{cor}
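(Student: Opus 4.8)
The plan is to read the corollary off directly from Proposition~\ref{prop:K1 group of L local algebra} ($K_1(\calL_L)=0$) together with the standard description of the $K_1$-group of a unital $C^\ast$-algebra. Note first that $\calL_L$ is unital, since $\Id$ commutes with $L$, so the elementary (unstabilized) picture of $K_1$ applies without having to pass to a unitization.

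Recall the construction of $K_1$ for a unital $C^\ast$-algebra $\calA$: one forms $\calU_\infty(\calA)=\bigsqcup_{n\in\NN}\calU_n(\calA)$ with connecting maps $u\mapsto u\oplus 1$, declares $u\sim_1 v$ (for $u\in\calU_m(\calA)$, $v\in\calU_{m'}(\calA)$) iff there is a common $k\in\NN$ with $u\oplus\Id_k\sim_h v\oplus\Id_k$ inside some $\calU_N(\calA)$, and sets $K_1(\calA)=\calU_\infty(\calA)/\!\sim_1$; see \cite[Chapter 8]{rordam2000introduction}. Under this construction the class $[\Id_1]_1$ is the neutral element $0\in K_1(\calA)$, and by definition $[u]_1=[v]_1$ is \emph{equivalent} to $u\sim_1 v$.

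Now apply this with $\calA=\calL_L$. Given $U\in\calU(\calL_L)=\calU_1(\calL_L)\subseteq\calU_\infty(\calL_L)$, its class $[U]_1\in K_1(\calL_L)$ equals $0$ because $K_1(\calL_L)=0$ by Proposition~\ref{prop:K1 group of L local algebra}; and $[\Id_1]_1=0$ as well. Hence $U\sim_1\Id_1$, which by the definition of $\sim_1$ means exactly that there exists $n\in\NN$ with
\[
U\oplus\Id_n\sim_h \Id_1\oplus\Id_n=\Id_{n+1}\quad\text{in }\calU_{n+1}(\calL_L),
\]
as claimed.

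I do not expect any genuine obstacle here: the entire mathematical content sits in the already-established identity $K_1(\calL_L)=0$, and the corollary is just its translation from the direct-limit statement to a finite stage. The only things to be careful about are bookkeeping ones: that $\Id$ indeed represents $0\in K_1$, and that the stabilization sizes line up (here $U\in\calU_1(\calL_L)$, so after adjoining $n$ copies of $\Id$ one lands in $\calU_{n+1}(\calL_L)$, matching $\Id_{n+1}$).
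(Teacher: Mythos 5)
Your proof is correct and takes the same route the paper does: the paper's own proof is the one-line remark that the claim ``follows from the construction of the $K_1$-group of a unital $C^\ast$-algebra'' (citing Rørdam, Def.~8.1.3), and you have simply unpacked that reference — using $K_1(\calL_L)=0$ from \cref{prop:K1 group of L local algebra}, the unital description $K_1(\calA)\cong \calU_\infty(\calA)/\!\sim_1$, and the definition of $\sim_1$ — to land on the stated homotopy at a finite matrix size. The only bookkeeping point worth flagging is that the unstabilized picture $\calU_\infty(\calA)/\!\sim_1$ for unital $\calA$ is Rørdam's Proposition~8.1.4 rather than Definition~8.1.3 itself, but you explicitly note unitality, so the argument is sound.
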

\begin{proof}
This follows from the construction of the $K_1$-group of a unital $C^\ast$-algebra, see \cite[Definition 8.1.3]{rordam2000introduction}.
\end{proof}

\section{\texorpdfstring{$K$-theory and homotopy}{}}\label{sec:remark on K-theory and our main results}
In a vague sense, one could say, reading the above \cref{sec:K-theory of the R-local algebra}, that in this manuscript we show that $K_1(\calL_L)$ and  $\pi_0(\calU(\calL_L))$ agree for the $C^\ast$-algebra $\calL_L$ of operators which essentially commute with our fixed $L$, so that one might conjecture that studying $K$-theory should always be enough. In this appendix we discuss this idea: in general it is false that the $K_1(\calA)$ and $\pi_0(\calU(\calA))$ will always agree for a unital $C^\ast$-algebra $\calA$, but in fact, for certain algebras (which our $L$-local algebra is an example of) this result is indeed true and automatic. In this sense, this idea would lead to an alternative proof of \cref{thm:R-local unitaries are path-connected} presented above.

We thank Shanshan Hua \cite{private_shanshan} for useful discussions regarding what follows here.
\subsection{Counter-examples}\label{sec:counter-examples}

There are examples of $C^\ast$-algebras $\calA$ where $K_1(\calA)=\{0\}$ but $\pi_0(\calU(\calA))\neq \{0\}$, and also examples where $K_1(\calA)\neq \{0\}$ but $\pi_0(\calU(\calA))=\{0\}$.

For $n,m\geq 1$, we have
\eq{
K_1(M_n(C(\bS^m))) \cong K_1(C(\bS^m)) \cong \begin{cases} \ZZ & m~\mathrm{odd} \\ \{0\} & m~\mathrm{even}\end{cases}\,.
}
On the other hand
\eq{
\pi_0(\calU(M_n(C(\bS^m)))) \cong \pi_0(C(\bS^m,U(n)))\cong \pi_m(U(n))\,.
}
Take $m=3$ and $n=1$, then
\eq{
	K_1(C(\bS^3)) \cong \ZZ,\quad \pi_0(\calU(C(\bS^3)))\cong \pi_3(U(1))=\{0\}\,.
}
Take $m=4$ and $n=2$, then
\eq{
	K_1(M_2(C(\bS^4))) \cong \{0\},\quad \pi_0(\calU(M_2(C(\bS^4))))\cong \pi_4(U(2))=\ZZ_2\,.
}

\subsection{\texorpdfstring{$K_1$-injectivity}{}}

The $R$-local algebra $\calL_R$ is the same as the \emph{Paschke dual algebra} defined as
\eql{\label{eq:dual of C(S) in R rep}
\calL_R\equiv \calD_\rho(C(\bS^1)):=\Set{A\in\calB(\calH)| [A,\rho(a)]\in\calK(\calH),\, \forall a\in C(\bS^1)}
}
where $\rho: C(\bS^1)\to \calB(\calH)$ is the natural representation given by $C(\bS^1)\cong \calC_R\hookrightarrow \calB(\calH)$. See \cite[Chapter 5]{higson2000analytic} for more discussion about the dual algebra.
The $C^\ast$-algebra \cref{eq:dual of C(S) in R rep} is in fact $K_1$-injective in the sense that the group homomorphism
\eql{\label{eq:K1 injective group homomorphism}
    \calU(\calL_R)/\calU^0(\calL_R) \ni [U] \mapsto [U]_1 \in K_1(\calL_R)
}
is injective, see \cite[Sec 8.3]{rordam2000introduction} and \cite{hua2024k} for more detail about the group homomorphism \cref{eq:K1 injective group homomorphism}. The set $\calU^0(\calL_R)$ is the set of $U\in\calU(\calL_R)$ such that $U\sim_h\Id$ in $\calU(\calL_R)$.
\begin{proof}[Sketch of proof that \cref{eq:K1 injective group homomorphism} is injective due to Hua \cite{private_shanshan}] 
    First, one shows that the Paschke dual algebra \cref{eq:dual of C(S) in R rep} is properly infinite; see e.g. \cite[Lemma 2.2]{loreaux2020remarks}. Second, for unital properly infinite $C^\ast$-algebras, to show $K_1$-injectivity, it suffices to show that the natural map \eql{\label{eq:natrual embedding of U to U2}
\calU(\calL_R)/\calU^0(\calL_R)\to \calU_2(\calL_R)/\calU^0_2(\calL_R)
}
is injective; see e.g. \cite[Proposition 5.2]{blanchard2008properly}. Third, it was shown in \cite[Lemma 3]{paschke1981k} that the map \cref{eq:natrual embedding of U to U2} is indeed injective.
\end{proof}

With the establishment $K_1$-injectivity of \cref{eq:K1 injective group homomorphism}, our main result \cref{thm:R-local unitaries are path-connected} follows directly from the well-known $K_1$-group calculation of $\calL_R$, as we presented in \cref{prop:K1 group of L local algebra}. From this perspective, we have given an alternative proof of the $K_1$-injectivity for the $C^\ast$-algebra $\calL_R$. On the other hand, we believe a purely functional analytic proof of \cref{thm:R-local unitaries are path-connected} exists (see \cite{ChungShapiro2023} for more relevant discussion on motivation for this).


\begingroup
\let\itshape\upshape
\printbibliography
\endgroup
\end{document}